\newcommand{\CC}{\mbox{${\mathbb C}$}}
\newcommand{\F}{\mbox{${\mathscr F}$}}
\newcommand{\sA}{\mbox{${\mathscr A}$}}
\renewcommand{\O}{{\mathcal O}}
\renewcommand{\dim}{\mathrm{dim}\,}
\def\C{\mathbb C}
\def\T{T}
\newtheorem{lema}{Lemma}[section]
\newtheorem{cor}[lema]{Corollary}
\newtheorem{teo}[lema]{Theorem}
\newtheorem*{teorema}{Theorem}
\newtheorem{prop}[lema]{Proposition}
\theoremstyle{definition}
\newtheorem{remark}[lema]{Remark}
\newtheorem{defi}[lema]{Definition}
\newtheorem{exe}[lema]{Example}
\begin{document}

\title[Families and unfoldings of singular   holomorphic Lie Algebroids]
{Families and unfoldings of singular   holomorphic Lie Algebroids}

\author[M. Corr\^ea]{Mauricio Corr\^ea\textsuperscript{1}}
\thanks{\textsuperscript{1} The author was fully supported by the CNPQ grants number 202374/2018-1, 302075/2015-1, and 400821/2016-8.}
\dedicatory{}
\address{}
\email{ }
\dedicatory{}
\address{}
\email{ }
\author[  A. Molinuevo]{ Ariel Molinuevo\textsuperscript{2}}
\thanks{\textsuperscript{2} The author was fully supported by Universidade Federal do Rio de Janeiro, Brazil.}
\dedicatory{}
\address{}
\email{}

\author[  F. Quallbrunn]{ Federico Quallbrunn\textsuperscript{3}}
\thanks{\textsuperscript{3} The author was fully supported by CONICET, Argentina.}
\dedicatory{}
\address{}
\email{ }
\keywords{}
\subjclass{}
\date{}

\thanks{ }
\subjclass{Primary 53D17, 14B12, 	32G08,  32S65, 37F75; secondary 14F05} \keywords{Deformations, Lie algebroids, Unfoldings}
\date{ }


\begin{abstract}
In this paper, we investigate families of singular holomorphic Lie algebroids on complex analytic spaces. We introduce and study a special type of deformation called \emph{unfoldings of Lie algebroids}, which generalizes the  theory of singular holomorphic foliations developed by T. Suwa.
We show that  a one to one correspondence between transversal unfoldings and holomorphic flat connections on a natural Lie algebroid on the bases exists.  
\end{abstract}

\maketitle

\section{Introduction}

At the beginning of the 1980's T. Suwa introduced the notion of \emph{unfolding of a codimension one foliation} in \cite{Suwa} in order to study the singularities of complex analytic foliations. He continued developing the theory in the articles \cite{suwa-multiform},\cite{suwa-complex},\cite{suwameromorphic} and others. His studies were about a certain type of infinitesimal perturbation of a codimension one foliation $\F_0=(\omega)$ defined by an integrable differential form $\omega\in \Omega^1_{\CC^n}$. This particular type of perturbations, named \emph{unfoldings}, define a codimension one foliation in the bigger space $\CC^n \times S$, where $S$ is the analytic spectrum of $\CC[\varepsilon]/(\varepsilon^2)$ such that it restricts to the original foliation in $\CC^n\times\{0\}$.
Then an unfolding of $\F_0=(\omega)$ is a foliation $\F=(\omega_{\varepsilon})$ where $\omega_\varepsilon$ is  a differential form in $\Omega^1_{\CC^{n} \times S|\CC}$. One can write  $\omega_\varepsilon = \omega + \varepsilon \eta + h d\varepsilon$ plus the condition of integrability on $\omega_\varepsilon$.

This contrasts with the definition of \emph{deformation} of a codimension one foliation $\omega$, because a deformation, parametrized by $S$, defines a relative differential form over $\Omega^1_{\CC^n\times S|S}$, such that it restricts to the original foliation in $\CC^n\times\{0\}$. Given this, one can write such deformation as $\widetilde{\omega}_\varepsilon=\omega + \varepsilon \eta$ plus the condition of integrability on $\widetilde{\omega}_\varepsilon$.

Remember that the integrability condition on $\omega$ is given by the equation
\[
 \omega\wedge d\omega = 0\ .
\]


\

In a more general setting we can change the space $\CC^n$ to $X$, a non-singular algebraic variety over an algebraically closed field  $k$. Similarly, we can change the infinitesimal parameter space $(\CC,0)$ to $S$, meaning any smooth scheme variety of finite type over the base field of $X$. Finally, we consider \emph{global} foliations and unfoldings. Now, let us be more precise and consider the following definitions:

\begin{defi}(Foliation)
 Let $X$ be a separated scheme of finite type over an alebraically closed field $k$. An \emph{involutive distribution} over $X$ is a subsheaf $T\F_0\subset TX$ that is closed under the Lie brackets. This way for every open set $U\subset X$ we have
\[
 [T\F_0(U),T\F_0(U)]\subset T\F_0(U)\ .
\]
The annihilator of an involutive distribution defines an \emph{integrable Pfaff system}. We define $\F_0$ to be a \emph{foliation}, if $\F_0$ is given by an involutive distribution or by an integrable Pfaff system. 
 \end{defi}

  \begin{remark}
  Note that with the above definition we are allowing a foliation to have singularities.
 \end{remark}


\begin{defi}(Unfolding of a foliation) Let $\F_0$ be a foliation in $X$ and $s\in S$ be a closed point. 
An \emph{unfolding} of $\F_0$ parametrized by $(S,s)$ is a foliation $\F$ on $X\times S$ such that:
\begin{enumerate}
 \item[i)] The restriction of $\F$ to $X\times \{s\}$ is $\F_0$.
 \item[ii)] The dimensions of $\F$ and $\F_0$ verify the formula $dim(\F) = dim(\F_0) + dim(S)$.
\end{enumerate}
 We say the unfolding $\F$ of $\F_0$ is \emph{isotrivial}, if it induces a trivial (\emph{i.e.}: constant) family of Pfaff systems equivalently of distributions.
\end{defi}

\begin{defi}(Transversal unfolding)
 Given a foliation $\F_0$ and an unfolding $\F$ of $\F_0$ parametrized by $(S,s)$ we have exact sequences
\[
 \xymatrix{
 0 \ar[r]^{} &T_S(\F) \ar[r] \ar@{^(->}[d] &T_S(X\times S) \ar[r]\ar@{^(->}[d]&N_S(\F)\ar[r] \ar@{^(->}[d]& 0 \\
 0 \ar[r]& T\F\ar[r]& T(X\times S)\ar[r]&N\F \ar[r]& 0\ .
 }
\]
We say that $\F$ is \emph{transversal}, if $N\F/N_S\F = 0$. And $T_S(\F)$ denotes the tangent space of the foliation $\F$, see \cite[Remark 2.5]{Quallbrunn} for a definition.
 \end{defi}

In  \cite[Theorem 2.12]{Quallbrunn} F. Quallbrunn showed the following result:
\begin{teorema}\label{correspondencia3}
Let $X$ be a non-singular variety and $\F_0$ a foliation on $X$.
 There is, for each scheme $S$, a $1$ to $1$ correspondence:
\[
\left\{\begin{aligned}
\text{isotrivial transversal unfoldings } \\
\text{of $\F$ parametrized by $S$} 
\end{aligned} \right\}\longleftrightarrow \left\{
\begin{aligned}
\text{sections }
\upsilon \in H^0(S,\Omega^1_S)\otimes H^0(X,\mathfrak{t}(\F_0))\\
 \text{ s.t.: } d\upsilon+\frac{1}{2} [\upsilon,\upsilon]=0
\end{aligned}
\right\}
\]
where $\mathfrak{t}(\F_0)$ is the subsheaf of $N_S\F_0$ defined by
\[
 \mathfrak{t}(\F_0)(U) = \{x\in N_S\F_0(U)\ \text{s.t.:}\ [T_S\F_0,x]=0\}
\]
for an open set $U\subset X\times S$.

Notice that the since the foliation is closed under brackets then the bracket operation of vector fields defines an action of $T_S\F$ in $N_S\F$.
\end{teorema}

\

There is a similar result in the real case  in \cite[Theorem 4.6]{Sheng}.

\

It is well known that every foliation is induced by a Lie algebroid, see Definition \ref{lie_algebroid} below for a proper definition. 

 We observe that  a section $ \upsilon \in H^0(S,\Omega^1_S)\otimes H^0(X,\mathfrak{t}(\F_0))$, satisfying   $d\upsilon+\frac{1}{2} [\upsilon,\upsilon]=0$,
 is such, that the induced $\O_S$-morphism
 \[
 \pi_* \upsilon:  \pi_* \mathfrak{t}(\F_0) \to  T_S\ ,
 \]
where $\pi:X\times S \to S$ is the projection, is the anchor map of the  Lie algebroid associated with the foliation
\[
 \pi_* \upsilon( \pi_* \mathfrak{t}(\F_0))\subset T_S\ .
\]

In this work we will generalize all the given definitions to the context of Lie algebroids and we will prove a similar result of the above theorem in that situation.
In this way we propose to study features of Lie algebroids that may be regarded as generalizations of singularities of foliations.
The applications we have in mind include the study of Poisson structures and sheaves of logarithmic forms. 
Given a family of algebroids $\sA_S$, we define a sheaf $\mathfrak{u}(\sA_S)$ with a Lie bracket. In Theorem \ref{correspondencia2} we give a correspondence between transversal unfoldings of a family of algebroids $\sA_S$ and morphisms
\[
 \pi^{-1}(T_S) \to \mathfrak{u}(\sA_S)
\]
respecting the brackets. As we prove in Proposition \ref{propLieAlgebroid}, the sheaf $\pi_*\mathfrak{u}(\sA_S)$ is a Lie algebroid and so the maps of Theorem \ref{correspondencia2} are in bijection with flat connections on the algebroid $\pi_*\mathfrak{u}(\sA_S)$. Flat connections on an algebroid are in correspondence with forms that have values in that same algebroid, satisfying Maurer-Cartan equations. In this way we arrive at a formulation which is similar but more general to that of \cite{Quallbrunn},

\[
\left\{
\text{ transversal unfoldings of $\sA_S$} 
\right\}\longleftrightarrow \left\{
\begin{aligned}
\text{sections } \upsilon \in H^0(S,\Omega^1_S\otimes \pi_*\mathfrak{u}(\sA_S))
\\ \text{ s.t.: } d\upsilon+\frac{1}{2} [\upsilon,\upsilon]=0
\end{aligned}
\right\}.
\]

In order to arrive to this conclusion, we investigate the structure as a Lie algebroid of the sheaf $D^{\leq 1}_X(\F)$ of differential operators of order less or equal to $1$ of a sheaf $\F$. 
The principal result here is Theorem \ref{propPullBackD}. 

\

Finally, in Section \ref{sec4} we illustrate and comment our results with several examples of unfoldings in different contexts.

\section{Lie Algebroids}

\begin{defi}(Lie algebroid)\label{lie_algebroid}
Let $\sA$ be a reflexive sheaf of $\O_X$-modules over a complex
manifold~$X$, equipped with a $\O_X$-morphism $a\colon\sA\to\T_X$.
We say that $\sA$ is a \emph{Lie algebroid} of \emph{anchor~$a$} if
there is a $\C$-bilinear map
$\{\cdot\,,\cdot\}\colon\sA\otimes_{\O_X}\sA\to\sA$  such that
\begin{itemize}
\item[(a)] $\{v,u\}=-\{u,v\}$;
\item[(b)] $\{u,\{v,w\}\}+\{v,\{w,u\}\}+\{w,\{u,v\}\}=0$;
\item[(c)] $\{g\cdot u,v\}=g\cdot\{u,v\}-a(v)(g)\cdot u$ for all $g\in\O_X$ and
$u$,~$v\in\sA$.
\end{itemize}
\end{defi}

The singular set of $\sA$ is defined by
$$
\mathrm{Sing}(\sA) = \mathrm{Sing}(\mathrm{Coker}(a)).
$$
The Lie algebroid $a\colon\sA\to\T_X$ induces a holomorphic foliation $Im(a)\subset T_X$. 

\begin{defi}(Pullback of a Lie algebroid) Given a Lie algebroid $\sA$ over a variety $X$ and a morphism $f:Y\to X$ we define the \emph{pullback algebroid} $f^\bullet \sA$ over $Y$. The underlying sheaf of $f^\bullet\sA$ is defined as the fibered product of the diagram
\[
 \xymatrix{
 f^\bullet\sA:=f^*\sA \oplus_{f^*T_X} T_Y \ar[r] \ar[d] & T_Y \ar[d]^{Df}\\
 f^*\sA\ar[r]^{a} & f^*T_X 
 }
\]
The anchor map is the top horizontal map in the above diagram. 
The Lie algebra structure is induced by the restriction of the bracket in the direct sum $f^*\sA \oplus T_Y$.
 
\end{defi}

Let  $f:X\to  S$ be a smooth morphism of analytic
spaces  and consider $T_{X|S}$ the relative tangent sheaf, which is naturally a subsheaf of $T_X$.

\begin{defi}
A \emph{family} of  singular holomorphic   Lie algebroids over $X$ is a  reflexive sheaf $\sA$ of modules over $X$ which is flat over $S$, equipped with a $\O_X$-morphism
 $a_S\colon\sA\to\T_{X|S}$  and an $f^{-1}\O_S$-linear map
$\{\cdot\,,\cdot\}_S\colon\sA\otimes_{f^{-1}\O_S}\sA\to\sA$ 
such that
\begin{itemize}
\item[(a)] $\{\alpha,\beta\}=-\{\beta,\alpha\}$;
\item[(b)] $\{\alpha,\{\beta,\gamma\}\}+\{\beta,\{\gamma,\alpha\}\}+\{\gamma,\{\alpha,\beta\}\}=0$;
\item[(c)] $\{f\cdot \alpha,\beta\}=f\{\alpha,\beta\}-a(\beta)(f)\cdot \alpha$ for all $f\in\O_X$ and
$\alpha$,~$\beta \in\sA$.
\end{itemize}
\end{defi}

\begin{remark}\label{remarkpullback} Given a family $\sA_S$ over a smooth morphism $p:X\to S$ and a morphism $f:R\to S$ we consider the fibered product
 \[
  \xymatrix{
    X_R \ar[r]^f \ar[d]_{p_R} & X \ar[d]^p\\
    R \ar[r]^f & S
  }
 \]
If we take the Lie algebroid pull-back $f^\bullet \sA$ we obtain a family over the smooth morphism $p_R:X_R\to R$. Observe that, using a covering of $X$ by open sets of the form $Y\times U$ with $U\subseteq S$ an open set, we can give local generators $\frac{\partial }{\partial y_1},\dots, \frac{\partial }{\partial y_d}$ of $T_{X|S}$ where $y_1,\dots, y_d$ are local coordinates of $Y$. Using such a covering we obtain a covering for $X_R$ of the form $Y\times V$ where $V=f^{-1}(U)\subseteq R$ which give local generators of $T_{X_R|R}$ of the form $\frac{\partial }{\partial y_1},\dots, \frac{\partial }{\partial y_d}$.

In other words we have $f^*T_{X|S}\simeq T_{X_R|R}$, so the underlying sheaf of the algebroid $f^\bullet \sA$ in this case is just $f^*\sA$.
\end{remark}

\begin{exe}
 An important caveat to have in mind here is that, contrary to what one may suppose, the algebroid pull-back is not an associative operation. In other words, in general $g^\bullet f^\bullet \sA \ncong (f\circ g)^\bullet \sA $. There is however a canonical morphism $c_{gf}:g^\bullet f^\bullet \sA \to (f\circ g)^\bullet \sA $ given by the pull-back property of $(f\circ g)^\bullet \sA$, but is not an isomorphism in general.  
 
 To see an example of this lets take $X=\mathbb{A}^2$, and $a:\sA\to T_\mathbb{A}^2$ to be the inclusion of the sheaf generated by the vector field $v=y\frac{\partial}{\partial x}+x\frac{\partial}{\partial y}$. Let $f:\mathbb{A}^1\to \mathbb{A}^2$ be the inclusion of the axis $(y=0)$ and $g:(0,0)\to \mathbb{A}^1$ the inclusion of the origin in the axis. It follows from the definition that $f^\bullet \sA$ is the pull-back in the diagram
 \[
  \xymatrix{
    (0) \ar[r] \ar[d] & T_{\mathbb{A}^1} \ar[d]\\
    f^*\sA \ar[r]_{[v] \mapsto [x\frac{\partial}{\partial y}] } & f^*T_{\mathbb{A}^2}
  }
 \] 
 so $f^\bullet \sA=(0)$, and therefore $g^\bullet f^\bullet \sA=(0)$. But the pull-back of the inclusion of the origin in the plane gives $(fg)^\bullet\sA$ which is 
  \[
  \xymatrix{
    (fg)^\bullet \sA \ar[r] \ar[d]_{\simeq} & (0) \ar[d]\\
    (fg)^*\sA \ar[r]_{[v] \mapsto 0 } & (fg)^*T_{\mathbb{A}^2}
  }
 \]
 so $(fg)^\bullet \sA $ is an algebroid over the point consisting of a vector space of dimension $1$ with trivial bracket and trivial anchor, \emph{i.e.}: an abelian one dimensional Lie algebra.  
\end{exe}

\section{Unfolding of Lie algebroids }

\begin{defi}(Unfolding of a Lie algebroid)
Let  $a_S:(\sA_S,\{\cdot\,,\cdot\}_S)\to T_{X|S}$ be a family of holomorphic Lie algebroids over a smooth morphism $\pi:X\to S$. 
An \emph{unfolding} of $\sA_S$  is a Lie algebroid $\sA$ on $ X$ with anchor $a:\sA\to T_X$ such 
that
\begin{itemize}
\item[(a)]  The family of Lie algebroids $\sA_S$, is recovered as $\sA_S= a^{-1}(Im(a_S))$. 
\item[(b)]  $\mathrm{rank}(\sA) =  \mathrm{rank}(\sA_S)  + \dim(S).$
\end{itemize}
An unfolding is defined to be \emph{isotrivial} if the induced flat family of algebroids is trivial. 
\end{defi}

\begin{defi}(Transversal unfolding)
 Let $\sA$ be an unfolding of a family $\sA_S$. Let $N\sA$ be the cokernel of the map $a:\sA\to T_X$ and $N_S\sA$ be the cokernel of the map $a_S:\sA_S\to T_{X|S}$. Notice that the maps $\sA_S\to \sA$ and $T_{X|S}\to T_{X}$ induce a map $N_S\sA\to N\sA$. The unfolding $\sA$ is said to be \emph{transversal} if the map $N_S\sA\to N\sA$ is an isomorphism.
\end{defi}

%
%
%
%
%

\begin{defi}(Differential operator)
 A \emph{differential operator} $\psi$ of order $\leq 1$ on a quasi-coherent sheaf $\sA$ over $X$ is a morphism of $\CC$-modules $\psi:\sA\to\sA$ such that for each local section $f\in \O_X(U)$ there is a local section $\psi f\in \O_X(U)$ such that, if $m_f:\sA|_U\to \sA|_U$ is multiplication by $f$, then 
 \[
\psi|_U\circ m_f - m_f\circ \psi|_U=m_{\psi f}.  
  \]
  We denote by $D^{\leq 1}_{X}(\sA)$ the $\O_X$-module of differential operators of order $\leq 1$ of $\sA$.
\end{defi}

\begin{defi}
 Given a differential operator $\psi$ of order $\leq 1$ the map $f\mapsto \psi f$ determines a derivation of $\O_X$. This derivation is called \emph{the symbol} of the operator $\psi$.
\end{defi}

\begin{remark}
 For any torsion free sheaf $\F$, the sheaf $D^{\leq 1}_X(\F)$ has a natural structure of Lie algebroid, as the commutator of differential operators define a bracket whose anchor is the symbol.
\end{remark}

\begin{remark}
The above definition of the sheaf of differential operator differs from that of \cite[16.8]{egaIV}. In \emph{loc. cit.} a differential operator between two sheaves $\F$ and $\mathscr{G}$ is an element of the sheaf $\mathcal{H}om_X(\mathcal{P}^1_X(\F),\mathscr{G})$, where $\mathcal{P}^1_X(\F)$ is the \emph{sheaf of principal parts of order $1$} of $\F$ (see \cite[16.7]{egaIV}). in the case $\F=\mathscr{G}=\O_X$ we have a short exact sequence
\[
 0\to \Omega^1_X\to \mathcal{P}^1_X\to \O_X\to 0,
\]
tensoring with a sheaf $\F$ gives the sequence $0\to \Omega^1_X\otimes \F\to \mathcal{P}^1_X(\F)\to \F\to 0$. Then applying $\mathcal{H}om_X(-,\mathscr{F})$ gives the exact sequence
\[
0\to \mathcal{E}nd(\F)\to \mathcal{H}om_X(\mathcal{P}^1_X(\F),\F) \to T_X\otimes  \mathcal{E}nd(\F)\to \mathcal{E}xt^1_X(\F,\F).
\]
Using the natural map $\O_X \to \mathcal{E}nd(\F)$ we have the following diagram
\[\xymatrix{
0 \ar[r] & \mathcal{E}nd(\F) \ar[r] \ar@{=}[d] & D^{\leq 1}_{X}(\F) \ar[r] \ar[d] & T_X \ar[d]  \\
0 \ar[r] & \mathcal{E}nd(\F) \ar[r] & \mathcal{H}om_X(\mathcal{P}^1_X(\F),\F) \ar[r] & T_X\otimes  \mathcal{E}nd(\F) 
}
\]
In which the right square is a pull-back diagram.
\end{remark}

\begin{lema}\label{lemafDD}
 Let $\F$ be a sheaf over $X$, and let $f:R\to S$ be a morphism. Denote by $Y$ the pull-back of $X$ by $f$ and by $f^*\F$ the corresponding pull-back of $\F$ as a sheaf over $Y$. There is a canonical morphism 
 \[
  f^\bullet D^{\leq 1}_X(\F)\to D^{\leq 1}_{Y}(f^*\F).
 \]
\end{lema}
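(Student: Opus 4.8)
The plan is to build the morphism explicitly on local sections and then verify it is well defined. Recall that $D^{\leq 1}_X(\F)$ is a Lie algebroid whose anchor is the symbol map $\sigma\colon D^{\leq 1}_X(\F)\to T_X$, so by the fibered product description of the algebroid pull-back a local section of $f^\bullet D^{\leq 1}_X(\F)$ is a pair $(\alpha,v)$, where $\alpha$ is a section of $f^*D^{\leq 1}_X(\F)$ and $v\in T_Y$, subject to the compatibility $\sigma(\alpha)=Df(v)$ in $f^*T_X$. To such a pair I want to attach the operator $\Phi(\alpha,v)$ on $f^*\F=\O_Y\otimes_{f^{-1}\O_X}f^{-1}\F$ defined on generators by
\[
\Phi(\alpha,v)(g\otimes s)=v(g)\otimes s+g\cdot(\alpha\cdot s),
\]
where $\alpha\cdot s\in f^*\F$ denotes a ``fiberwise action'' of $\alpha$ on $s\in f^{-1}\F$, which I define first.

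To make sense of $\alpha\cdot s$, I would isolate the following point. For a fixed local section $s$ of $\F$, the evaluation $\mathrm{ev}_s\colon D^{\leq 1}_X(\F)\to\F$, $\psi\mapsto\psi(s)$, is $\O_X$-linear, because the $\O_X$-module structure on operators is by postcomposition, $(h\psi)(s)=h\cdot\psi(s)$. Hence $f^{-1}\mathrm{ev}_s$ is $f^{-1}\O_X$-linear and extends to an $\O_Y$-linear map $\overline{\mathrm{ev}}_s\colon f^*D^{\leq 1}_X(\F)\to f^*\F$, and I set $\alpha\cdot s:=\overline{\mathrm{ev}}_s(\alpha)$. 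This already guarantees that $\alpha\cdot s$ depends only on $\alpha$ and $s$ and not on any expression of $\alpha$ as a combination $\sum_iG_i\otimes\psi_i$ of pulled back operators, for which it equals $\sum_iG_i\otimes\psi_i(s)$.

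Next I would check that $\Phi(\alpha,v)$ descends to the tensor product, i.e. that it respects the relation $g\,f^{\sharp}(h)\otimes s=g\otimes hs$ for $h\in f^{-1}\O_X$, where $f^{\sharp}\colon f^{-1}\O_X\to\O_Y$ is the structure morphism. Expanding both sides using the Leibniz rule for $v$ and for each $\psi_i$, the cross terms match precisely because the compatibility $\sigma(\alpha)=Df(v)$, evaluated on a local function $h$, is exactly the identity $v(f^{\sharp}h)=\sum_iG_i\,f^{\sharp}(\psi_i h)$; this is what absorbs the derivative of $v$ on $f^{\sharp}(h)$ into the symbols of the $\psi_i$. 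Granting this, a one line commutator computation gives $[\Phi(\alpha,v),m_G]=m_{v(G)}$ for $G\in\O_Y$, so $\Phi(\alpha,v)$ is a differential operator of order $\leq 1$ on $f^*\F$ with symbol $v$; and the $\O_Y$-linearity of $\overline{\mathrm{ev}}_s$ shows that $(\alpha,v)\mapsto\Phi(\alpha,v)$ is $\O_Y$-linear, hence a morphism of sheaves. Since the symbol of $\Phi(\alpha,v)$ is the anchor $v$ of $(\alpha,v)$, the map is compatible with anchors, and one verifies on local generators that it intertwines the two commutator brackets, so it is in fact a morphism of Lie algebroids.

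The main obstacle is the well-definedness over the tensor product in the third step: everything hinges on the single identity forced by the fibered product, namely that the symbol of $\alpha$ equals $Df(v)$, and the point is to see that this is precisely the obstruction to the naive assignment $g\otimes s\mapsto v(g)\otimes s+g\otimes\psi(s)$ being independent of the presentation of a class in $\O_Y\otimes_{f^{-1}\O_X}f^{-1}\F$. Once the $\O_X$-linearity of $\mathrm{ev}_s$ is identified (which is what legitimizes the fiberwise action $\alpha\cdot s$ and requires no hypothesis on $\F$ beyond being a sheaf of $\O_X$-modules), the remaining verifications are routine Leibniz-rule bookkeeping.
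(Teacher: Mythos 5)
Your proposal is correct and follows essentially the same route as the paper: the paper's proof likewise defines the operator on local presentations by the explicit Leibniz-type formula $\alpha(a)=\sum_{ij}\psi_i(a_j)\otimes r_it_j+\sum_j a_j\otimes v(t_j)$, exactly your $\Phi(\alpha,v)(g\otimes s)=v(g)\otimes s+g\cdot(\alpha\cdot s)$ up to the order of tensor factors. The paper in fact stops at writing this formula, so your additional checks (the invariant definition of $\alpha\cdot s$ via $\O_X$-linearity of $\psi\mapsto\psi(s)$, the balancing over $f^{-1}\O_X$ forced by $\sigma(\alpha)=Df(v)$, and the computation that the symbol of $\Phi(\alpha,v)$ is $v$) supply precisely the well-definedness details that the paper's proof leaves implicit.
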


\begin{proof}
  To a local section of $f^\bullet D^{\leq 1}_X(\F)$ we can explicitly assign a differential operator on $f^*\F$ by the following formula. A local section $\alpha$ of $f^\bullet D^{\leq 1}_X(\F)$ is by definition 
 \[
  \alpha = \left(\sum_i \psi_i \otimes r_i , v \right),
 \]
where $\psi_i\otimes r_i$ are local sections of $D^{\leq 1}_X(\F)\otimes \O_{X_R}$ and $v$ is a local section of $T_{Y}$ such that $\sigma\left(\psi_i\otimes r_i\right) = Df(v)$.
Now, given a local section $a=\sum_i a_i \otimes t_i$ of $\F\otimes\O_{Y}$ we define 
\[
 \alpha(a):= \sum_{ij} (\psi_i(a_j)\otimes r_it_j +a_j \otimes v(t_j)s_i \in f^*\F .
\]
\end{proof}

\begin{lema}\label{lemaPartesPrincipales}
 Let $\F$ be such that both  $\F$ and $f^*\F$ are \emph{reflexive} sheaves over $X$ and $Y$ respectively, then there is a morphism 
 \[
  \mathcal{H}om_Y(\mathcal{P}^1_Y(f^*\F),f^*\F)\to f^*\left(\mathcal{H}om_X(\mathcal{P}^1_X(\F),\F)\right) .
 \]
\end{lema}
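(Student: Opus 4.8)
The plan is to construct the morphism first over the open locus where the sheaves are locally free, where it is forced by functoriality, and then to extend it across the remaining codimension-$\geq 2$ locus by reflexivity. The starting point is the canonical functoriality morphism of principal parts. For the projection $f\colon Y\to X$ there is a natural $\O_Y$-linear map $c\colon f^*\mathcal{P}^1_X(\F)\to \mathcal{P}^1_Y(f^*\F)$, induced on the two fundamental exact sequences $0\to\Omega^1\otimes\F\to\mathcal{P}^1(\F)\to\F\to 0$ by the cotangent map $f^*\Omega^1_X\to\Omega^1_Y$: concretely $c$ is the morphism of sequences whose right-hand vertical is the identity on $f^*\F$ and whose left-hand vertical is $(f^*\Omega^1_X\to\Omega^1_Y)\otimes f^*\F$. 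Applying $\mathcal{H}om_Y(-,f^*\F)$ to $c$ yields a morphism $\mathcal{H}om_Y(\mathcal{P}^1_Y(f^*\F),f^*\F)\to \mathcal{H}om_Y(f^*\mathcal{P}^1_X(\F),f^*\F)$, while the base-change map gives $\theta\colon f^*\mathcal{H}om_X(\mathcal{P}^1_X(\F),\F)\to \mathcal{H}om_Y(f^*\mathcal{P}^1_X(\F),f^*\F)$. The desired arrow will be $\theta^{-1}\circ\mathcal{H}om(c,\mathrm{id})$, so the whole issue is to invert $\theta$, which is not an isomorphism in general precisely because $f$ need not be flat (as the pathological example of the point in the axis shows).

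Next I would localize to the good locus. Let $U\subseteq Y$ be the open set over which both $f^*\F$ and $f^*\mathcal{P}^1_X(\F)$ are locally free; since $f^*\F$ is reflexive (hence torsion free) and $X$ is smooth (so $\Omega^1_X$, and thus $\mathcal{P}^1_X(\F)$ on the free locus of $\F$, is locally free), we have $\codim(Y\setminus U)\geq 2$. Over $U$ the first argument of $\theta$ is locally free, so $\theta|_U$ is an isomorphism, and I may define $\phi_U=(\theta|_U)^{-1}\circ\mathcal{H}om(c,\mathrm{id})|_U$, a morphism $\mathcal{H}om_Y(\mathcal{P}^1_Y(f^*\F),f^*\F)|_U\to f^*\mathcal{H}om_X(\mathcal{P}^1_X(\F),\F)|_U$.

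Then I would extend $\phi_U$ to all of $Y$. The source $\mathcal{S}:=\mathcal{H}om_Y(\mathcal{P}^1_Y(f^*\F),f^*\F)$ is reflexive, being a $\mathcal{H}om$ into the reflexive sheaf $f^*\F$; hence $\mathcal{S}=j_*(\mathcal{S}|_U)$ for the inclusion $j\colon U\hookrightarrow Y$, and $\phi_U$ induces a canonical map $\mathcal{S}\to j_*\big(f^*\mathcal{H}om_X(\mathcal{P}^1_X(\F),\F)|_U\big)$. To finish, it suffices that the target $f^*\mathcal{H}om_X(\mathcal{P}^1_X(\F),\F)$ be reflexive: then it equals $j_*$ of its own restriction to $U$, the displayed map factors through it, and the factorization is unique because the target is torsion free. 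This produces the required global morphism, independent of the auxiliary choices.

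I expect the main obstacle to be exactly this reflexivity of the target. The sheaf $\mathcal{H}om_X(\mathcal{P}^1_X(\F),\F)$ is reflexive because $\F$ is, but a pullback of a reflexive sheaf along an arbitrary $f$ need not stay reflexive, so the factorization through the target — rather than merely through $j_*$ of its restriction — is the crux and is where the hypothesis that $f^*\F$ be reflexive is genuinely used. The key point to verify is that, in the relevant geometric situation (where $f$ is the base change of a smooth family, so that its fibres are well behaved and the $S_2$ property is preserved under pullback), the sheaf $f^*\mathcal{H}om_X(\mathcal{P}^1_X(\F),\F)$ is again reflexive; the normality ($S_2$) supplied by reflexivity is what simultaneously pins down the source over the codimension-$\geq 2$ locus and prevents the target from being enlarged there, so that the two reflexive hulls agree and the extension of $\phi_U$ is well defined.
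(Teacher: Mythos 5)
Your overall skeleton is the same as the paper's: build the morphism over the locus where everything is locally free, where it is forced by functoriality of principal parts, and then extend across the codimension~$\geq 2$ complement. Indeed your map $\phi_U=(\theta|_U)^{-1}\circ\mathcal{H}om(c,\id)|_U$ is exactly the map the paper obtains by tensoring the EGA comparison morphism $\mathcal{H}om_Y(\mathcal{P}^1_Y,\O_Y)\to f^*\bigl(\mathcal{H}om_X(\mathcal{P}^1_X,\O_X)\bigr)$ with $f^*\F$ (on the good locus $\mathcal{P}^1_X(\F)$ is locally free, so the base-change map $\theta$ is invertible and the two constructions agree). The problem is the last step, and you flag it yourself: your extension argument needs the target $f^*\bigl(\mathcal{H}om_X(\mathcal{P}^1_X(\F),\F)\bigr)$ to be reflexive, equivalently to coincide with $j_*$ of its restriction to the good locus, and you never prove this; the gesture towards ``$S_2$ is preserved under pullback of a smooth family'' is not an argument. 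Worse, in the generality in which the lemma is applied this is precisely the kind of statement that fails: $f\colon Y\to X$ is the base change of an arbitrary morphism $R\to S$, which is typically \emph{not} flat (inclusion of a point of $S$, or an Artinian thickening), and pulling back along non-flat maps creates torsion and destroys reflexivity --- the paper's own example, where pulling the algebroid generated by $y\frac{\partial}{\partial x}+x\frac{\partial}{\partial y}$ back to the origin changes it completely, is an instance of exactly this phenomenon. So as written, your proposal is an honest reduction of the lemma to an unproved (and, in this generality, dubious) auxiliary claim, not a proof.

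The paper closes this step by a different mechanism, which only invokes the hypotheses actually stated, namely reflexivity of $\F$ and of $f^*\F$: local sections of $\mathcal{H}om_X(\mathcal{P}^1_X(\F),\F)$ over an open $V$ are identified with $\C$-linear \emph{differential maps} $\F(V)\to\F(V)$ in the sense of EGA~IV; since $\F\simeq j_*(\F|_U)$ and $f^*\F\simeq j_*(f^*\F|_{f^{-1}U})$, such operators are determined by, and extend from, their restrictions to $U$ (respectively $f^{-1}U$), and the morphism of the locally free case is then extended at the level of these operators. No reflexivity of any pulled-back sheaf is ever asserted. (To be fair, the paper is itself terse about why the extended map factors through the pullback target rather than merely through $j_*$ of its restriction, but its mechanism at least never requires the claim your argument hinges on.) A secondary point, shared by both arguments: to know that $Y\setminus f^{-1}U$ has codimension $\geq 2$ in $Y$ one really needs the \emph{relative} codimension hypothesis on the non-locally-free locus of $\F$, as in Theorem~\ref{propPullBackD}, not just the bare hypotheses of Lemma~\ref{lemaPartesPrincipales}. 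If you want to repair your write-up, replace your final paragraph by the differential-operator extension argument: show the restriction maps on sections of $\mathcal{H}om_Y(\mathcal{P}^1_Y(f^*\F),f^*\F)$ from $V$ to $V\cap f^{-1}U$ are bijective via that description, and identify where $\phi_U$ lands using the locally free description of the target over $f^{-1}U$, instead of trying to prove reflexivity of $f^*\bigl(\mathcal{H}om_X(\mathcal{P}^1_X(\F),\F)\bigr)$.
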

\begin{proof}
 By \cite{egaIV} there is always a morphism 
 \[
  \mathcal{H}om_Y(\mathcal{P}^1_Y,\O_Y)\to f^*\left(\mathcal{H}om_X(\mathcal{P}^1_X,\O_X)\right)
 \]
 In the case where $\F$ is locally free we have canonical isomorphisms
 \[ 
 \mathcal{H}om_Y(\mathcal{P}^1_Y(f^*\F),f^*\F) \cong \mathcal{H}om_Y(\mathcal{P}^1_Y(f^*\F),\O_Y) \otimes f^*\F
 \] 
 and 
 \[
 f^*\left(\mathcal{H}om_X(\mathcal{P}^1_X,\O_X)\right)\otimes f^*\F \cong  
  f^*\left(\mathcal{H}om_X(\mathcal{P}^1_X(\F),\F)\right).
  \]
  Then from the above morphism between the sheaves of principal parts we get
 \[
   \mathcal{H}om_Y(\mathcal{P}^1_Y(f^*\F),\O_Y) \otimes f^*\F   
  \to 
  f^*\left(\mathcal{H}om_X(\mathcal{P}^1_X,\O_X)\right)\otimes f^*\F ,
 \]
 which gives us a morphism $\mathcal{H}om_Y(\mathcal{P}^1_Y(f^*\F),f^*\F)\to f^*\left(\mathcal{H}om_X(\mathcal{P}^1_X(\F),\F)\right) $ if $\F$ is locally free.
 If more generally $\F$ is reflexive then there is an open set $U$ such that $\F|_U$ is locally free and such that every section of $\F(U)$ extends to a global section, in other words, if $j:U\to X$ is the inclusion then we have an isomorphism $j_*(\F|_U)\simeq \F$. If also $f^*\F$ is reflexive then $f^*\F|_{f^{-1}U}$ is locally free and $j_*(f^*\F|_{f^{-1}U})\simeq f^*\F$.
 
 Local sections of $\mathcal{H}om_X(\mathcal{P}^1_X(\F),\F)$ over an open set $V$ are in natural correspondence with $\C$-linear morphisms $\F(V)\to\F(V)$ that are differential maps in the sense of \cite{egaIV}. In particular every section of $\mathcal{H}om_X(\mathcal{P}^1_X(\F),\F)(U)$ extends to a global section. And the same can be said about sections of $\mathcal{H}om_Y(\mathcal{P}^1_Y(f^*\F),f^*\F)$ over $(f^{-1}U)$ extending to global sections. Now over $f^{-1}U$ we have a morphism
 \[
  \mathcal{H}om_Y(\mathcal{P}^1_Y(f^*\F),f^*\F)|_{f^{-1}U}\to f^*\left(\mathcal{H}om_X(\mathcal{P}^1_X(\F),\F)|_U\right).
 \]
 Which extends to a morphism $\mathcal{H}om_Y(\mathcal{P}^1_Y(f^*\F),f^*\F) \to f^*\left(\mathcal{H}om_X(\mathcal{P}^1_X(\F),\F)\right)$ as wanted.
\end{proof}

\begin{lema}\label{lemaEnd}
 Let $\F$ be a reflexive sheaf over $X$ and $U\subseteq X$ be an open set such that $\F|_U$ is locally free and such that every section of $\O_X(U)$ extends to a global section. Then there is a section $\mathcal{E}nd(\F)\to\O_X$ to the canonical inclusion $\O_X\to \mathcal{E}nd(\F)$. 
\end{lema}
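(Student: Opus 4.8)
The plan is to build the retraction from the trace on the locally free locus and then extend it to all of $X$ using reflexivity. Write $n$ for the rank of $\F$ (which we may assume positive, as otherwise $\mathcal{E}nd(\F)=0$ and there is nothing to split) and let $j\colon U\to X$ be the open inclusion. Over $U$ the sheaf $\F|_U$ is locally free of rank $n$, so $\mathcal{E}nd(\F)|_U=\mathcal{E}nd(\F|_U)$ is locally the sheaf of $n\times n$ matrices over $\O_U$, and the usual trace is independent of the chosen local frame. Hence the normalized trace $\tfrac{1}{n}\mathrm{tr}\colon\mathcal{E}nd(\F|_U)\to\O_U$ is a well-defined $\O_U$-linear morphism. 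Since the canonical inclusion $\O_U\to\mathcal{E}nd(\F|_U)$ sends a local function $g$ to $g\cdot\id$ and $\mathrm{tr}(g\cdot\id)=ng$, the composite $\O_U\to\mathcal{E}nd(\F|_U)\to\O_U$ is the identity. Thus over $U$ we already have the desired splitting.

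Next I would extend this over $X$. Exactly as in the proof of Lemma~\ref{lemaPartesPrincipales}, the reflexivity of $\F$ together with the extension property for sections of $\O_X$ give canonical isomorphisms $\F\cong j_*(\F|_U)$ and $\O_X\cong j_*\O_U$. The adjunction unit provides a natural morphism $\mathcal{E}nd(\F)\to j_*\big(\mathcal{E}nd(\F)|_U\big)=j_*\mathcal{E}nd(\F|_U)$, and applying $j_*$ to the normalized trace yields $j_*\mathcal{E}nd(\F|_U)\to j_*\O_U$. Composing these with $j_*\O_U=\O_X$ produces a morphism
\[
 r\colon \mathcal{E}nd(\F)\to j_*\mathcal{E}nd(\F|_U)\xrightarrow{\ j_*(\tfrac{1}{n}\mathrm{tr})\ } j_*\O_U=\O_X .
\]

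Finally I would check that $r$ splits the canonical inclusion $\iota\colon\O_X\to\mathcal{E}nd(\F)$. The composite $r\circ\iota\colon\O_X\to\O_X$ is an endomorphism of $\O_X$, hence multiplication by a global section $h$; by the first step its restriction to $U$ is the identity, so $h|_U=1$. Because $\O_X\cong j_*\O_U$, a section of $\O_X$ is determined by its restriction to $U$, and therefore $h=1$, i.e. $r\circ\iota=\id_{\O_X}$. This shows $r$ is the sought section.

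The main obstacle is the extension step: one must make precise that the trace, defined only on the locally free locus $U$, glues into a global morphism with target $\O_X$. This is exactly where both hypotheses enter. Reflexivity of $\F$ controls $\mathcal{E}nd(\F)$ by its behavior on $U$, while the hypothesis that sections of $\O_X$ over $U$ extend yields $\O_X\cong j_*\O_U$, so that the codomain of the trace extends correctly; neither isomorphism is formal for a general non-locally-free $\F$. Over $\C$ the factor $1/n$ causes no difficulty, and the verification of $r\circ\iota=\id$ is then immediate from the separatedness of sections guaranteed by the same extension property.
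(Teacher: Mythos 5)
Your proof is correct and follows essentially the same route as the paper's: split over $U$ using the normalized trace (which is exactly the paper's evaluation map on $\F|_U\otimes\F^\vee|_U$ divided by the generic rank), then extend to $X$ via $j_*$ using the isomorphisms $\F\simeq j_*(\F|_U)$ and $\O_X\simeq j_*(\O_X|_U)$ coming from reflexivity and the extension hypothesis. Your explicit verification that $r\circ\iota=\id_{\O_X}$ via $\O_X\simeq j_*\O_U$ is a minor (and welcome) addition of detail the paper leaves implicit.
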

\begin{proof}
 Let $j:U\hookrightarrow X$ be the inclusion, then over $U$ we have $\mathcal{E}nd(\F)|_U \simeq \F|_U\otimes \F^\vee|_U$, so there is the evaluation map $\F|_U\otimes \F^\vee|_U\to \O_X|_U$, which divided by the generic rank of $\F$ is a section of the inclusion $\O_X\to \mathcal{E}nd(\F)$. As both $\F\simeq j_*(\F|_U)$ and $\O_X\simeq j_ *(\O_X|_U)$ by hypothesis, we have $\mathcal{E}nd(\F)\simeq j_*(\F|_U\otimes \F^\vee|_U)$ from which we get the morphism $\mathcal{E}nd(\F)\to\O_X$.   
\end{proof}

\begin{defi}
 A closed subset $Z\subseteq X$ is of \emph{relative codimension} $c$ over $S$ if and only if for every point $s\in S$ we have $\dim X_s - \dim Z_s=c$
\end{defi}

\begin{teo}\label{propPullBackD}
 Let $\F$ be a reflexive sheaf over $X$, flat over $S$, with $X$ smooth over $S$ and such that there is an open set $U\subseteq X$ with $\F|_U$ locally free and such that $Z:=X\setminus U$ is of relative codimension $\geq 2$.
 Let $f:R\to S$ be a morphism. Denote by $X_R$ the pull-back of $X$ by $f$ and by $f^*\F$ the corresponding pull-back of $\F$ as a sheaf over $X_R$ flat over $R$. Then the canonical morphism 
 \(
  f^\bullet D^{\leq 1}_X(\F)\to D^{\leq 1}_{X_R}(f^*\F)
 \) is an isomorphism of Lie algebroids.
\end{teo}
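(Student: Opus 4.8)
The plan is to check the statement first over the open locus where $\F$ is locally free, where everything is an elementary computation, and then to propagate the isomorphism across the degeneracy locus by a Hartogs-type argument using reflexivity and the relative-codimension hypothesis. Set $U_R:=f^{-1}(U)\subseteq X_R$, so that $f^*\F|_{U_R}$ is locally free. Over $U_R$ the symbol gives a short exact sequence
\[
0\to \mathcal{E}nd(f^*\F)\to D^{\leq 1}_{X_R}(f^*\F)\xrightarrow{\ \sigma\ }T_{X_R}\to 0,
\]
and unwinding the fibered-product definition of $f^\bullet$ (together with the surjectivity of $f^*\sigma$ in the locally free case) presents the source in the parallel sequence
\[
0\to f^*\mathcal{E}nd(\F)\to f^\bullet D^{\leq 1}_X(\F)\to T_{X_R}\to 0 .
\]
Since $\mathcal{E}nd(f^*\F)\cong f^*\mathcal{E}nd(\F)$ over $U_R$, and since the canonical morphism of Lemma \ref{lemafDD} preserves symbols and restricts to this identification on the $\mathcal{E}nd$-terms, the five lemma shows that it is an isomorphism over $U_R$.

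Next I would establish reflexivity of the two sheaves. For the target this is direct: $\mathcal{E}nd(f^*\F)=\mathcal{H}om_{X_R}(f^*\F,f^*\F)$ is a $\mathcal{H}om$ into a reflexive sheaf, hence reflexive, and by the pullback-diagram description of the Remark, $D^{\leq 1}_{X_R}(f^*\F)$ is the kernel of a morphism from the reflexive sheaf $\mathcal{H}om_{X_R}(\mathcal{P}^1_{X_R}(f^*\F),f^*\F)\oplus T_{X_R}$ into the torsion-free sheaf $T_{X_R}\otimes\mathcal{E}nd(f^*\F)$, and such a kernel is reflexive. For the source I would use Lemma \ref{lemaPartesPrincipales} to replace the non-reflexive factor $f^*\mathcal{H}om_X(\mathcal{P}^1_X(\F),\F)$ by the reflexive sheaf $\mathcal{H}om_{X_R}(\mathcal{P}^1_{X_R}(f^*\F),f^*\F)$, and Lemma \ref{lemaEnd} to split the $\mathcal{E}nd$-contribution off of $\O_{X_R}$; this lets one exhibit $f^\bullet D^{\leq 1}_X(\F)$, again, as a kernel of a reflexive-to-torsion-free morphism, and hence as reflexive.

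It then remains to control the degeneracy locus. By hypothesis $Z=X\setminus U$ has relative codimension $\geq 2$ over $S$; reading off the fiberwise definition, this property is preserved by the base change $f\colon R\to S$, so $Z_R=f^{-1}(Z)=X_R\setminus U_R$ has relative codimension $\geq 2$ over $R$ and therefore codimension $\geq 2$ in $X_R$. A morphism between reflexive sheaves that restricts to an isomorphism over the complement of a closed set of codimension $\geq 2$ is itself an isomorphism, because each such sheaf is recovered as the pushforward of its restriction to that complement, and more generally $\mathcal{H}om$ into a reflexive sheaf enjoys the same extension property. Applying this to the canonical morphism, which is an isomorphism over $U_R$ by the first paragraph, yields an isomorphism of $\O_{X_R}$-modules; compatibility with anchors and brackets is built into its construction in Lemma \ref{lemafDD}, so it is an isomorphism of Lie algebroids.

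I expect the reflexivity of the source $f^\bullet D^{\leq 1}_X(\F)$ to be the main obstacle. Pullback does not preserve reflexivity in general, so this cannot be asserted formally; it is exactly here that Lemmas \ref{lemaPartesPrincipales} and \ref{lemaEnd} must do the real work, trading the problematic pullback $f^*D^{\leq 1}_X(\F)$ for sheaves of the form $\mathcal{H}om(\mathcal{P}^1(-),-)$ that are manifestly $\mathcal{H}om$ into reflexive sheaves. The flatness of $\F$ over $S$ is what makes the principal-parts formation commute with the base change and guarantees that $f^*\F$ is again reflexive, so that these lemmas apply.
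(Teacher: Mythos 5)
Your first step is sound: over $U_R=f^{-1}(U)$ both sheaves sit in symbol sequences with the same kernel $f^*\mathcal{E}nd(\F)\cong\mathcal{E}nd(f^*\F)$ and the same quotient $T_{X_R}$, so the canonical morphism of Lemma \ref{lemafDD} is an isomorphism there; and the relative-codimension hypothesis is indeed stable under base change. But the proof breaks at exactly the point you flag as ``the main obstacle'', and the fix you sketch cannot work as stated. Lemma \ref{lemaPartesPrincipales} points in the wrong direction for your purpose: it gives a morphism $\mathcal{H}om_{X_R}(\mathcal{P}^1_{X_R}(f^*\F),f^*\F)\to f^*\bigl(\mathcal{H}om_X(\mathcal{P}^1_X(\F),\F)\bigr)$, i.e.\ \emph{from} the reflexive Hom-sheaf \emph{into} the pullback. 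To exhibit $f^\bullet D^{\leq 1}_X(\F)=\ker\bigl(f^*D^{\leq 1}_X(\F)\oplus T_{X_R}\to f^*T_X\bigr)$ as the kernel of a map from a reflexive sheaf to a torsion-free one, you would need a morphism going the other way, from the pullback $f^*D^{\leq 1}_X(\F)$ (or $f^*\mathcal{H}om_X(\mathcal{P}^1_X(\F),\F)$) into a reflexive sheaf on $X_R$, compatible with symbols; neither lemma supplies such a map, and producing one is essentially equivalent to producing the inverse of the canonical morphism, i.e.\ to the theorem itself. The danger is not hypothetical: only $\F$ is assumed flat over $S$, not $D^{\leq 1}_X(\F)$ or $\mathcal{E}nd(\F)$, so their pullbacks can acquire torsion along $Z_R$; without torsion-freeness and the extension property $j_*\bigl(f^\bullet D^{\leq 1}_X(\F)|_{U_R}\bigr)\simeq f^\bullet D^{\leq 1}_X(\F)$ for the \emph{source}, your Hartogs step establishes neither injectivity nor surjectivity across $Z_R$.

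The paper's proof avoids this issue by never extending across $Z_R$ at all: it builds a global inverse. Lemma \ref{lemaEnd} (applicable precisely because $Z$ has relative codimension $\geq 2$ and $X\to S$ is smooth, so sections of $\O_X$ over $U$ extend) gives a retraction $\mathcal{E}nd(\F)\to\O_X$, hence a splitting $\mathcal{H}om_X(\mathcal{P}^1_X(\F),\F)\to D^{\leq 1}_X(\F)$; composing the inclusion $D^{\leq 1}_{X_R}(f^*\F)\hookrightarrow \mathcal{H}om_{X_R}(\mathcal{P}^1_{X_R}(f^*\F),f^*\F)$ with the (correctly oriented) morphism of Lemma \ref{lemaPartesPrincipales} and with $f^*$ of that splitting produces a map $D^{\leq 1}_{X_R}(f^*\F)\to f^*D^{\leq 1}_X(\F)$, which the universal property of the fibered product lifts to $D^{\leq 1}_{X_R}(f^*\F)\to f^\bullet D^{\leq 1}_X(\F)$; one composition is the identity by that same universal property, and the other is an isomorphism by a diagram chase against the symbol sequence. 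So the two lemmas you tried to use for reflexivity are really the ingredients of this explicit inverse; to complete your outline you would have to construct that map anyway, at which point the restriction-and-extension scaffolding becomes unnecessary.
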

\begin{proof}
 Lemma \ref{lemafDD} gives the existence of a morphism $f^\bullet D^{\leq 1}_X(\F)\to D^{\leq 1}_{X_R}(\F)$. Here we will show the existence of an inverse to this morphism. 
 
 By Lemma \ref{lemaPartesPrincipales} there is a morphism 
 \[
  \mathcal{H}om(\mathcal{P}^1_{X_R}(f^*\F),f^*\F)\to f^*\left(\mathcal{H}om(\mathcal{P}^1_{X}(\F),\F)\right).
 \]
 Now, as $X\setminus U$ is of relative codimension $2$ over $S$ and $X\to S$ is smooth, then every section of $\O_X$ over $U$ extends to a global section, then we are in condition to apply Lemma \ref{lemaEnd} and get a splitting $\mathcal{E}nd(\F)\to\O_X$. The fact that $D^{\leq 1}_X(\F)$ is a pull-back of $\mathcal{H}om(\mathcal{P}^1_{X}(\F),\F)$ by $T_X$ over $T_X\otimes \mathcal{E}nd(\F)$ gives in turn a splitting $\mathcal{H}om(\mathcal{P}^1_{X}(\F),\F)\to D^{\leq 1}_X(\F)$. So we have a commutative diagram
 \[
  \xymatrix{
  \mathcal{H}om(\mathcal{P}^1_{X_R}(f^*\F),f^*\F) \ar[r]  & f^*\left(\mathcal{H}om(\mathcal{P}^1_{X}(\F),\F)\right) \ar@/^/[d] \\
  D^{\leq 1}_{X_R}(f^*\F) \ar[u] \ar[r] & f^*D^{\leq 1}_X(\F). 
  }
 \]
From the bottom arrow of this diagram and the pull-back property of $f^\bullet D^{\leq 1}_X(\F)$ we get a morphism
\[
 D^{\leq 1}_{X_R}(f^*\F) \to f^\bullet D^{\leq 1}_X(\F).
\]
Because $f^\bullet D^{\leq 1}_X(\F)$ is defined as a pull-back, the composition 
\[
f^\bullet D^{\leq 1}_X(\F)\to D^{\leq 1}_{X_R}(f^*\F) \to f^\bullet D^{\leq 1}_X(\F)
\]
is the identity, so $f^\bullet D^{\leq 1}_X(\F)\to D^{\leq 1}_{X_R}(f^*\F)$ is a monomorphism. The other composition, that is $D^{\leq 1}_{X_R}(f^*\F) \to f^\bullet D^{\leq 1}_X(\F)\to D^{\leq 1}_{X_R}(f^*\F)$ gives the central vertical morphism in the diagram
\[
 \xymatrix{
 0 \ar[r] & \mathcal{E}nd(f^*\F) \ar[r] \ar@{=}[d] & D^{\leq 1}_{X_R}(f^*\F) \ar[r] \ar [d] & T_{X_R} \ar@{=}[d] \\
 0 \ar[r] & \mathcal{E}nd(f^*\F) \ar[r]  & D^{\leq 1}_{X_R}(f^*\F) \ar[r]  & T_{X_R}, 
 }
\]
so $f^\bullet D^{\leq 1}_X(\F)\to D^{\leq 1}_{X_R}(f^*\F)$ is also an epimorphism.
\end{proof}

\begin{remark}
 If $a:\sA\to T_X$ is a Lie algebroid and $\alpha\in\sA(U)$ is a local section, then the map
 \[
 \psi|_U: \sA|_U\to\sA|_U
 \]
 defined by $ \psi|_U(\alpha):= \{\alpha,-\}$ is both a derivation of the Lie algebra structure of $\sA|_U$ and a differential operator of order $\leq 1$. 
Indeed,  we have that for each local section $f\in \O_X(U) $ and $\alpha \in \sA|_U$ we obtain   a section  $\psi f:=-a( \{\alpha,-\})( f)\in \O_X(U)$. Thus  by the   identity 
$$\{f \cdot \alpha, -\}=f\cdot\{\alpha, -\}- a( \{\alpha,-\})(f)\cdot \alpha. $$ 
  we conclude that 
 \[
(\psi|_U\circ m_f)(\alpha) -( m_f\circ \psi|_U)(\alpha)=m_{\psi f}(\alpha)  
  \]
 for all  $f\in \O_X(U) $ and $\alpha \in \sA|_U$. 
\end{remark}

We will denote by $\mathrm{Der}_{\mathrm{Lie}}(\sA)$ the sheaf of derivations of the Lie algebra structure of $\sA$. It is a sub-sheaf of the sheaf $\mathrm{End}_{\O_X}(\sA)$ of $\O_X$-linear endomorphisms.

\begin{defi}
Given a family of algebroids $(\sA_S,\{\cdot\,,\cdot\}_S)$, we denote by $\sigma:D^{\leq 1}_{X}(\sA)\to T_X$ the symbol map of differential operators, then we have  the diagram
 \[
  \xymatrix{
  \sA_S \ar[r] \ar[d]^{a_S} & \mathrm{Der}_{\mathrm{Lie}}(\sA_S)\cap D^{\leq 1}_{X} \ar[d]^\sigma \ar[r] & \left(\mathrm{Der}_{\mathrm{Lie}}(\sA_S)\cap D^{\leq 1}_{X}(\sA_S)\right)/\sA_S \ar[d]^{\sigma}\\
  T_{X|S} \ar[r] & T_X \ar[r] & \pi^* T_S.
  }
 \]
Where the intersection is taken as subsheaves of the sheaf $\mathcal{H}om_{f^{-1}\O_S}(\sA_S,\sA_S)$ of endomorphisms of $f^{-1}\O_S$-modules of $\sA_S$.
 We define the sheaf $\mathfrak{u}(\sA_S)$ as 
\[
 \mathfrak{u}(\sA_S):=\sigma^{-1}(\pi^{-1}T_S)\subseteq \left(\mathrm{Der}_{\mathrm{Lie}}(\sA_S)\cap D^{\leq 1}_{X}(\sA_S)\right)/\sA_S.
\]
Note that $\mathfrak{u}(\sA_S)$ is \emph{not} a coherent sheaf over $X$ but only a sheaf of $f^{-1}\O_S$-modules.
\end{defi}

\begin{remark}
The sheaf $\mathfrak{u}(\sA_S)$ inherits from $\mathrm{Der}_{\mathrm{Lie}}(\sA_S)$ the structure of a sheaf of Lie algebras. 
Indeed, as the inclusion $\sA_S \subseteq \mathrm{Der}_{\mathrm{Lie}}(\sA_S)$ is an ideal of Lie algebras, the Lie bracket $[\psi, \phi]=\psi\circ\phi-\phi\circ\psi$ passes to the quotient to a bracket in $\mathfrak{u}(\sA_S)$. 
\end{remark}

Let us begin by considering an unfolding $ (\sA,\{\cdot\,,\cdot\})$ of a family of algebroids  $ (\sA_S,\{\cdot\,,\cdot\}_S)$ parametrized by $S$.   
%
Note that, when the unfolding is transversal, we have that the anchor $a:\sA\to T_X$ determines an isomorphism $[a]: \sA/\sA_S\xrightarrow{\cong}\pi^* T_S$, so we can consider the morphism $\upsilon_{\sA}:\pi^* T_S\to \sA/\sA_S$ defined as the inverse of the isomorphism determined by the anchor.

\begin{prop}\label{upsilon}
If  $ (\sA,\{\cdot\,,\cdot\})$  is transversal to $S$ then $\upsilon_{\sA}(\pi^{-1}T_S)$ is a subsheaf of $\mathfrak{u}(\sA)$.
\end{prop}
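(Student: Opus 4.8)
The plan is to realise the inclusion through the adjoint representation of the unfolding. For a local section $v$ of $\pi^{-1}T_S$, write $\alpha\in\sA(U)$ for a local representative of the class $\upsilon_{\sA}(v)\in(\sA/\sA_S)(U)$, and assign to it the operator $\mathrm{ad}_\alpha=\{\alpha,-\}$ restricted to $\sA_S$. By the Remark preceding the definition of $\mathfrak{u}(\sA_S)$, the operator $\mathrm{ad}_\alpha$ is at once a derivation of the Lie algebra structure and a differential operator of order $\le 1$, and its symbol is the vector field $a(\alpha)$. The proposition then amounts to showing that, read modulo $\sA_S$, this operator is a well-defined section of $\mathfrak{u}(\sA_S)=\sigma^{-1}(\pi^{-1}T_S)$ depending only on $v$.

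First I would extract from transversality the identity $\Im(a_S)=\Im(a)\cap T_{X|S}$: the transversality morphism $T_{X|S}/\Im(a_S)\to T_X/\Im(a)$ factors as the quotient $T_{X|S}/\Im(a_S)\to T_{X|S}/(\Im(a)\cap T_{X|S})$ followed by the inclusion $T_{X|S}/(\Im(a)\cap T_{X|S})\hookrightarrow T_X/\Im(a)$; being an isomorphism, it forces the quotient map to be injective, hence the equality. Consequently $\sA_S=a^{-1}(T_{X|S})$, the kernel of the composite $\sA\to T_X\to\pi^*T_S$. With this in hand I would verify that $\mathrm{ad}_\alpha$ preserves $\sA_S$. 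Since the anchor is a morphism of brackets (a consequence of the Jacobi identity and the Leibniz rule), for $\beta\in\sA_S$ we have $a\{\alpha,\beta\}=[a(\alpha),a(\beta)]\in\Im(a)$ by involutivity, and the crucial point is that this is also vertical: as $d\pi(a(\alpha))=[a](\upsilon_{\sA}(v))=v$ is a \emph{basic} field (a section of $\pi^{-1}T_S$, independent of the representative), the bracket of the $\pi$-projectable field $a(\alpha)$ with the vertical field $a(\beta)$ is again vertical, i.e. lies in $T_{X|S}$. Hence $a\{\alpha,\beta\}\in\Im(a)\cap T_{X|S}=\Im(a_S)$ and therefore $\{\alpha,\beta\}\in a^{-1}(\Im(a_S))=\sA_S$.

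It then remains to assemble the statement. Two representatives of $\upsilon_{\sA}(v)$ differ by a section of $\sA_S$, whose adjoint is the image of $\sA_S$ inside $\mathrm{Der}_{\mathrm{Lie}}(\sA_S)$; hence the class of $\mathrm{ad}_\alpha$ modulo $\sA_S$ depends only on $v$. Its symbol is the image of $a(\alpha)$ in $\pi^*T_S$, namely $v\in\pi^{-1}T_S$, so the class lies in $\sigma^{-1}(\pi^{-1}T_S)=\mathfrak{u}(\sA_S)$. Finally, composing this assignment with $\sigma$ returns $[a]\circ\upsilon_{\sA}$, i.e. the inclusion $\pi^{-1}T_S\hookrightarrow\pi^*T_S$, which is injective; thus the assignment is a monomorphism and exhibits $\upsilon_{\sA}(\pi^{-1}T_S)$ as a subsheaf of $\mathfrak{u}(\sA_S)$.

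The step I expect to be the main obstacle is the preservation $\mathrm{ad}_\alpha(\sA_S)\subseteq\sA_S$. This fails for a general class in $\sA/\sA_S\cong\pi^*T_S$: multiplying $v$ by a non-basic function of the fibre coordinates destroys the verticality of the bracket, as one sees already for $\sA=T_X$ with $\sA_S=T_{X|S}$. The inclusion therefore genuinely uses both that the unfolding is transversal (so that $\Im(a_S)=\Im(a)\cap T_{X|S}$) and that one restricts to $\pi^{-1}T_S$ rather than to all of $\pi^*T_S$; reconciling these is where the care is needed.
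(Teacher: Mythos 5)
Your proof is correct and takes essentially the same route as the paper's: both reduce the statement to showing $\{\tilde{s},\sA_S\}\subseteq\sA_S$ for any lift $\tilde{s}$ of a section of $\upsilon_{\sA}(\pi^{-1}T_S)$, establish that $a(\{\tilde{s},\alpha\})$ is vertical because $a(\tilde{s})$ projects to a basic field (the paper via the local decomposition $a(\tilde{s})=Y+Z$ with $Z\in\pi^{-1}T_S$, you via the projectable-bracket-vertical argument), and then conclude using $\Im(a)\cap T_{X|S}=\Im(a_S)$. The only difference is one of explicitness: you derive that last identity from the injectivity of $N_S\sA\to N\sA$ and spell out the well-definedness of the adjoint class modulo $\sA_S$, both of which the paper leaves implicit in its final line.
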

\begin{proof}
Note that the statement is making reference to $\pi^{-1}T_S\subset \pi^{*}T_S$, that is the sheaf of vector fields that are constant along the fibers of $\pi$, also known as \emph{basic vector fields}.
Then,  given a local section $s\in \upsilon_{\sA}(\pi^{-1}T_S)\subseteq \sA/\sA_S$ we need to show that, for any lifting $\tilde{s}$ of $s$ in $\sA$ we have that $\{\sA_S, \tilde{s}\}\subseteq \sA_S$.
In other words we need to show that if $\alpha$ is a local section of $\sA_S$, then $a(\{\tilde{s},\alpha\})\in T_{X|S}$.
Locally in $X$ we can take $a(\tilde{s})$ of the form $Y+Z$ with $Y\in T_{X|S}$ and $Z\in  \pi^{-1}T_S$.

Noting $W=a(\alpha)\in T_{X|S}$ we compute
\[ 
a(\{\tilde{s},\alpha\})=[W, a(\tilde{s})]= [W, Y+Z] = [W,Y]- Z(W),
\] 
since  $W(Z)=0$, being $Z$ in $\pi^{-1}T_S$.
Then $a(\{\tilde{s},\alpha\})$ is in $T_{X|S}$, and also in $a(\sA)$, so it is in $a(\sA_S)$.
\end{proof}

\begin{teo}\label{correspondencia2}
Let $X$ be a non-singular variety, and $(\sA_S,\{\cdot\,,\cdot\}_S)$ a family of algebroids on $X$ parametrized by a scheme of finite type $S$.
 There is, for each scheme $S$, a $1$ to $1$ correspondence:
$$ \left\{   \begin{array}{c}
\text{transversal unfoldings } \\
\text{of $\sA_S$ } 
    \end{array} \right\} 
 \longleftrightarrow
\left\{   \begin{array}{c}
    \text{morphisms } \pi^{-1}(T_S)\to \mathfrak{u}(\sA_S)
\\ \text{ respecting brackets}
    \end{array} \right\}. $$

\end{teo}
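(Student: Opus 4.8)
The plan is to realise the stated correspondence as a pair of mutually inverse assignments, and in each case to check that the output lies in the required set. I begin with the map from transversal unfoldings to morphisms. Given a transversal unfolding $(\sA,\{\cdot\,,\cdot\})$, transversality provides the isomorphism $[a]\colon \sA/\sA_S \xrightarrow{\cong}\pi^*T_S$ and hence $\upsilon_{\sA}$. Writing $\mathcal{D}:=\mathrm{Der}_{\mathrm{Lie}}(\sA_S)\cap D^{\leq 1}_{X}(\sA_S)$, I would define $\Theta(\sA)\colon\pi^{-1}T_S\to\mathfrak{u}(\sA_S)$ by sending a local basic vector field $s$ to the class of the adjoint operator $\{\tilde s,-\}|_{\sA_S}$, where $\tilde s\in\sA$ is any local lift of $\upsilon_{\sA}(s)$. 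Proposition \ref{upsilon} shows this operator preserves $\sA_S$; the remark identifying adjoint operators as first-order operators shows it is a Lie-algebra derivation lying in $D^{\leq 1}_{X}(\sA_S)$ with symbol $a(\tilde s)$; and since $[a]\upsilon_{\sA}(s)=s$ is basic, its symbol projects into $\pi^{-1}T_S$, so the class lands in $\mathfrak{u}(\sA_S)=\sigma^{-1}(\pi^{-1}T_S)$. The class is independent of the lift because two lifts differ by a section of $\sA_S$, altering the operator by an inner derivation, which vanishes in $\mathcal{D}/\sA_S$. That $\Theta(\sA)$ respects brackets follows from the identity $a(\{u,v\})=[a(u),a(v)]$ valid for any Lie algebroid: thus $a(\{\tilde s_1,\tilde s_2\})$ projects to $[s_1,s_2]$, so $\{\tilde s_1,\tilde s_2\}$ is a lift of $\upsilon_{\sA}([s_1,s_2])$, and the Jacobi identity $\{\{\tilde s_1,\tilde s_2\},-\}=[\{\tilde s_1,-\},\{\tilde s_2,-\}]$ matches it with the commutator bracket of $\mathfrak{u}(\sA_S)$.

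Conversely, from a bracket-respecting $\phi\colon\pi^{-1}T_S\to\mathfrak{u}(\sA_S)$ I would build an unfolding as an extension of $\O_X$-modules
\[
0\to\sA_S\to\sA\to\pi^*T_S\to 0,
\]
with anchor $a$ extending $a_S$ and inducing the identity on $\pi^*T_S$ (using a local splitting $T_X\cong T_{X|S}\oplus\pi^*T_S$ as in Remark \ref{remarkpullback}). The bracket is prescribed on local lifts by $\{\alpha,\beta\}=\{\alpha,\beta\}_S$ for $\alpha,\beta\in\sA_S$, by $\{\tilde v,\alpha\}=\Phi(v)(\alpha)$ for a representative $\Phi(v)$ of $\phi(v)$ and $\alpha\in\sA_S$, and by letting $\{\tilde v_1,\tilde v_2\}$ be the section of $\sA_S$ determined by $\phi([v_1,v_2])$; the definition is then extended to all of $\sA$ by antisymmetry and the Leibniz rule (c). Antisymmetry and Leibniz hold by construction, and the crux is the Jacobi identity: the only nontrivial Jacobiator involves two lifts together with a section of $\sA_S$ and measures the failure of $v\mapsto\Phi(v)$ to be a Lie-algebra homomorphism, i.e. its deviation from landing in $\phi(\pi^{-1}T_S)$. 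This deviation is inner, and the hypothesis that $\phi$ \emph{respects brackets} is precisely what forces the correcting section $\{\tilde v_1,\tilde v_2\}\in\sA_S$ to exist consistently and make the Jacobiator vanish.

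I expect this backward construction to be the main obstacle, for two reasons. First, the adjoint embedding $\sA_S\hookrightarrow\mathcal{D}$ is \emph{not} $\O_X$-linear, since $\mathrm{ad}_{f\alpha}$ differs from $f\,\mathrm{ad}_{\alpha}$ by a zeroth-order term, as (c) shows; consequently $\mathfrak{u}(\sA_S)$ is only an $f^{-1}\O_S$-module, and the representatives $\Phi(v)$ together with the correcting sections $\{\tilde v_1,\tilde v_2\}$ must be chosen and checked to patch into a genuine reflexive $\O_X$-module $\sA$ rather than merely a sheaf of $f^{-1}\O_S$-modules. Second, these operators are defined a priori only over the locus where $\sA_S$ is locally free; to extend them across $\mathrm{Sing}(\sA_S)$ and glue them globally I would invoke reflexivity together with the relative codimension $\geq 2$ hypothesis, using Lemmas \ref{lemaPartesPrincipales} and \ref{lemaEnd} and Theorem \ref{propPullBackD} to guarantee that the formation of first-order operators commutes with pullback and that the locally defined data extend uniquely over the singular set.

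Finally I would verify that the output is a transversal unfolding and that the two assignments are inverse. The rank equals $\mathrm{rank}(\sA_S)+\dim S$ by construction; one has $\sA_S=a^{-1}(\Im a_S)$ because the anchor is injective modulo $T_{X|S}$ on the $\pi^*T_S$-part; and applying the snake lemma to the two anchor sequences shows $N_S\sA\to N\sA$ is an isomorphism, so $\sA$ is transversal. Mutual invertibility is then immediate from the formulas: the adjoint operator $\{\tilde v,-\}|_{\sA_S}$ attached to the constructed $\sA$ is by definition the representative $\Phi(v)$ of $\phi(v)$, recovering $\phi$; and conversely, rebuilding the bracket of a given transversal $\sA$ from $\Theta(\sA)$ reproduces the original bracket on lifts, hence on all of $\sA$ by the Leibniz rule.
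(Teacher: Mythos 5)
Your forward direction is essentially the paper's: it rests on Proposition \ref{upsilon}, and your computational verification that the assignment respects brackets (via $a(\{u,v\})=[a(u),a(v)]$ and the Jacobi identity) is an acceptable substitute for the paper's more structural argument, which passes to quotients along the diagram of Lie ideals $\sA_S\subseteq A$, $a(\sA_S)\subseteq a(A)$.

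The converse direction, however, has a genuine gap, and you flag it yourself without closing it. You never construct the $\O_X$-module $\sA$: you posit ``an extension of $\O_X$-modules $0\to\sA_S\to\sA\to\pi^*T_S\to 0$'' and prescribe a bracket on local lifts, but producing that extension --- i.e.\ showing that the locally chosen representatives $\Phi(v)$ and correcting sections patch into a coherent $\O_X$-module carrying a global bracket --- is precisely the content of this half of the theorem, and you defer it to ``the main obstacle.'' Moreover, your proposed remedy is off-target: Lemmas \ref{lemaPartesPrincipales} and \ref{lemaEnd} and Theorem \ref{propPullBackD} concern base change of $D^{\leq 1}$ along a morphism $f\colon R\to S$ and require a relative-codimension-$\geq 2$ hypothesis on the singular locus that Theorem \ref{correspondencia2} does not assume; they say nothing about gluing locally defined brackets, and indeed the paper's proof of this theorem never invokes them (they are used only for the later functoriality statement about $\pi_{R*}\mathfrak{u}(\phi^\bullet\sA)$). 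The paper sidesteps local choices entirely: it defines $A$ globally as the fibered product of $\pi^{-1}(T_S)\to\mathfrak{u}(\sA_S)\leftarrow \mathrm{Der}_{\mathrm{Lie}}(\sA_S)\cap D^{\leq 1}_{X}(\sA_S)$, a sheaf of $\pi^{-1}\O_S$-Lie algebras fitting in $0\to\sA_S\to A\to\pi^{-1}(T_S)\to 0$ with anchor $\tilde a\colon A\to D^{\leq 1}_{X}\to T_X$, and then forces $\O_X$-linearity algebraically by setting $\sA=\bigl(A\otimes_{\pi^{-1}\O_S}\O_X\bigr)/B$, where $B$ is the subsheaf generated by the elements $\alpha\otimes f-f\alpha\otimes 1$ with $\alpha$ in $\sA_S$ and $f$ in $\O_X$; the anchor descends because $a'|_B=0$, the bracket extends to $A\otimes\O_X$ by an explicit Leibniz-type formula, and $B$ is checked to be a Lie ideal. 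That single global construction delivers simultaneously the underlying module, its anchor, and its algebroid structure, with no appeal to reflexivity of local representatives, to the singular locus, or to any gluing argument --- which is exactly the missing idea in your outline.
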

\begin{proof}
Morphism associated to an unfolding: 
given a transversal unfolding $\sA$ of $\sA_S$ we have $\sA/\sA_S \cong \pi^*(T_S)$.
Then, it follows from  Proposition  \ref{upsilon} that we have  a map
 \[ \upsilon_{\sA}: \pi^{-1}(T_S)\to \mathfrak{u}(\sA_S).\]


Now, in order  to establish the first part of the correspondence we need to prove that this is a map of sheaves of $\pi^{-1}\O_S$-Lie algebras. 
Considere  $\upsilon_{\sA}(\pi^{-1}(T_S))\subseteq \mathfrak{u}(\sA_S)$ and we   take $A\subseteq \sA$ its pre-image under the morphism $\sA\to \sA/\sA_S$.
Since  $A/\sA_S$ is a subsheaf of $\mathfrak{u}(\sA_S)$, then $\sA_S\subseteq A$ is a Lie ideal, so we have a diagram
\[
 \xymatrix{
 \sA_S\ar[r] \ar[d]^a & A \ar[r] \ar[d]^a & \upsilon_{\sA}(\pi^{-1}(T_S) )\ar[d] \\
 a(\sA_S) \ar[r] & a(A) \ar[r] &\pi^{-1}(T_S),
 }
\]
as $\sA_S$ and $a(\sA_S)$ are Lie ideals of $A$ and $a(A)$ respectively, and the anchor $a$ is a morphism of Lie algebras. 
Then the morphism induced in the quotient $\upsilon_{\sA}(\pi^{-1}(T_S))\to  \pi^{-1}(T_S)$ is also a Lie algebra morphism, so also its inverse $\upsilon_{\sA}$  is a  morphism of sheaves of Lie algebras.
\\
\\
\noindent Unfolding associated with a morphism: given a morphism of sheaves of Lie algebras

\[
\upsilon: \pi^{-1}(T_S)\to \mathfrak{u}(\sA_S),
\]
we get an extension of sheaves of Lie algebras over $\pi^{-1}(\O_S)$

\[
 0\to \sA_S \to A \to  \pi^{-1}(T_S)\to 0.
\]

Indeed, the Lie algebra $A$ is defined as the pull-back of the diagram 
\[
 \xymatrix{
 A\ar[r] \ar[d] & \pi^{-1}(T_S)\ar[d] \\
 \mathrm{Der}_{\mathrm{Lie}}(\sA_S)\cap D^{\leq 1}_{X}(\sA_S) \ar[r] & \mathfrak{u}(\sA_S).
 }
\]
Moreover,  we have a morphism $\tilde{a}:A\to T_X$ defined by the composition $$A\to D^{\leq 1}_{X} \to T_X.$$
However, $A$ is not a quasi-coherent module over $X$, to get a module over $\O_X$ we need to modify this sheaf a little. 
For this we define the sheaf of sub-modules $B\subseteq A\otimes_{\pi^{-1}\O_S}\O_X$ as the quasi-coherent subsheaf generated by the stalks of the form $\alpha\otimes f - f \alpha\otimes 1 $, where $\alpha$ is a stalk of $\sA_S$ and $f$ a stalk of $\O_X$.
Then we define
\[
 \sA :=_{\mathrm{def}} (A\otimes_{\pi^{-1}\O_S} \O_X )/ B.
\]
The map $\tilde{a}$ can be extended to an $\O_X$-linear map $a':A\otimes \O_X \to T_X$. As $a'|_B=0$, we get an $\O_X$-linear map $\sA\to T_X$ extending the map $A\to T_X$.
Also notice that $\sA_S$ is a subsheaf of $\sA$ and that 
\[
 \sA/\sA_S= A/\sA_S\otimes \O_X=\pi^{-1}T_S\otimes \O_X=\pi^* T_S.
\]
We can extend the Lie bracket of $A$ to a Lie bracket in $A\otimes \O_X$ by the formula
\[
\{\alpha\otimes f, \beta\otimes g\}= \beta\otimes (f\cdot\tilde{a}(\alpha)(g))+\alpha\otimes(g\cdot\tilde{a}(\beta)(f))+\{\alpha,\beta\}\otimes f\cdot g.
\]
With this bracket the subsheaf $B$ is a sheaf of Lie ideals.
Therefore we get that $\sA$ has a Lie algebroid structure and it is an unfolding of $\sA_S$.

The construction of the morphism $ \pi^{-1}(T_S)\to \mathfrak{u}(\sA_S)$ associated with an unfolding and of the unfolding associated with the morphism are inverse to each other.
\end{proof}

\begin{prop}\label{propLieAlgebroid}
$\pi_* \mathfrak{u}(\sA_S)$ has the structure of a Lie algebroid over $S$.
\end{prop}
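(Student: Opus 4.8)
The plan is to equip $\pi_*\mathfrak{u}(\sA_S)$ with the three pieces of data required by the Definition of Lie algebroid—an $\O_S$-module structure, an anchor to $T_S$, and a $\C$-bilinear bracket—and then to verify axioms (a)--(c), the only substantial one being the Leibniz rule. I would work throughout at the level of sections: for an open $V\subseteq S$ one has $\pi_*\mathfrak{u}(\sA_S)(V)=\mathfrak{u}(\sA_S)(\pi^{-1}V)$, whose elements are classes $[\psi]$ of first-order Lie-derivations $\psi$ of $\sA_S|_{\pi^{-1}V}$ whose symbol $\sigma(\psi)$ is a basic vector field on $\pi^{-1}V$. Since the preceding Remark already makes $\mathfrak{u}(\sA_S)$ a sheaf of $\pi^{-1}\O_S$-Lie algebras, with bracket induced by the commutator $[\psi,\phi]=\psi\circ\phi-\phi\circ\psi$, the antisymmetry (a) and the Jacobi identity (b) will be inherited by $\pi_*\mathfrak{u}(\sA_S)$ for free; the genuine content of the proposition is thus the $\O_S$-linear structure together with the Leibniz identity relating it to the anchor.

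The anchor and the $\O_S$-action both rest on the identification of basic vector fields on $\pi^{-1}V$ with vector fields on $V$, namely that every section of $\pi^{-1}T_S$ over $\pi^{-1}V$ is $\pi^*\xi$ for a unique $\xi\in T_S(V)$. Granting this, I would define the anchor by $\bar a([\psi]):=\xi$ where $\sigma(\psi)=\pi^*\xi$; this is simply the pushforward of the symbol map, and it recovers, in the foliated case, the anchor $\pi_*\upsilon$ appearing after Theorem \ref{correspondencia}. For the module structure I would set $g\cdot[\psi]:=[(\pi^*g)\psi]$ for $g\in\O_S(V)$, and then check that $(\pi^*g)\psi$ again lies in $\mathfrak{u}(\sA_S)$: since $\pi^*g\in\pi^{-1}\O_S$ is annihilated by every relative tangent vector and $\{\cdot\,,\cdot\}_S$ is $\pi^{-1}\O_S$-bilinear, one gets $\{(\pi^*g)\psi\alpha,\beta\}_S=(\pi^*g)\{\psi\alpha,\beta\}_S$, so $(\pi^*g)\psi$ is still a Lie-derivation, still of order $\leq 1$, with symbol $(\pi^*g)\sigma(\psi)=\pi^*(g\xi)$ basic. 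The same computation shows $(\pi^*g)\{\alpha,-\}=\{(\pi^*g)\alpha,-\}$ for $\alpha$ a section of $\sA_S$, so the action preserves the inner derivations and descends to the quotient, making $g\cdot[\psi]$ well defined.

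The remaining step, and the heart of the argument, is the Leibniz rule. I would represent $u,v$ by $\psi_u,\psi_v$ and write $\xi_v=\bar a(v)$, so $\sigma(\psi_v)=\pi^*\xi_v$. Using that $\psi_v$ has order $\leq 1$, so that $\psi_v\circ m_{\pi^*g}=m_{\pi^*g}\circ\psi_v+m_{\sigma(\psi_v)(\pi^*g)}$ with $\sigma(\psi_v)(\pi^*g)=\pi^*(\xi_v(g))$, a short commutator computation yields
\[
[(\pi^*g)\psi_u,\psi_v]=(\pi^*g)[\psi_u,\psi_v]-\pi^*(\xi_v(g))\,\psi_u.
\]
Passing to classes modulo $\sA_S$ this becomes $\{g\cdot u,v\}=g\{u,v\}-\bar a(v)(g)\cdot u$, which is exactly axiom (c). Combined with the inherited (a) and (b), this will establish that $\pi_*\mathfrak{u}(\sA_S)$ is a Lie algebroid over $S$ with anchor $\bar a$.

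The step I expect to be the main obstacle is the identification of basic vector fields with $T_S(V)$, i.e. the isomorphism $\pi_*\pi^{-1}T_S\cong T_S$: this is precisely where smoothness and connectedness of the fibers of $\pi$ must be invoked, and it is what guarantees both that $\bar a$ truly takes values in $T_S$ and that the $\O_S$-action is unambiguous. A secondary technical point is that the Definition of Lie algebroid demands a reflexive sheaf, so one should also verify that $\pi_*$ of the reflexive sheaf $\mathfrak{u}(\sA_S)$ remains reflexive over $\O_S$—or else pass to its reflexive hull, which alters neither the bracket nor the anchor outside a locus of codimension $\geq 2$.
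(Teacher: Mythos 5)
Your proposal is correct and follows essentially the same route as the paper: push forward the $\pi^{-1}\O_S$-module structure, the commutator bracket, and the symbol map, using $\pi_*\pi^{-1}T_S\cong T_S$ to land the anchor in $T_S$. The paper's own proof is in fact sketchier than yours---it does not verify the Leibniz identity at all, and it obtains the isomorphism $\pi_*\pi^{-1}T_S\cong T_S$ by assuming $\pi$ proper, which is exactly the point you flagged as the main obstacle.
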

\begin{proof}
 Since  $\mathfrak{u}(\sA_S)$ is a $\pi^{-1}\O_S$-module then $\pi_* \mathfrak{u}(\sA_S)$ is an $\O_S$-module. 
 It is endowed with a Lie algebra bracket which is the push-forward of the bracket of $\mathfrak{u}(\sA_S)$. 
 Its anchor map can be defined as follows: 
 Taking the natural map $D^{\leq 1}_{X}\to T_X$ one gets by considering the derivation defined by a differential operator we get a diagram
 \[
  \xymatrix{
  \sA_S \ar[r] \ar[d]^{a_S} & \mathrm{Der}_{\mathrm{Lie}}(\sA_S)\cap D^{\leq 1}_{X} \ar[d]^\sigma \ar[r] & \mathfrak{u}(\sA_S) \ar[d]^{a_\mathfrak{u}}\\
  T_{X|S} \ar[r] & T_X \ar[r] & \pi^{-1} T_S.
  }
 \]
When the morphism $\pi$ is proper we have a natural isomorphism $\pi_* \pi^* T_S \cong T_S$.
 The anchor map  of $\pi_* \mathfrak{u}(\sA_S)$ is then 
 \[
  \pi_* a_{\mathfrak{u}}: \pi_* \mathfrak{u}(\sA_S)\to \pi_* \pi^{-1} T_S \cong T_S.
 \]
\end{proof}

Recall that a flat connection on an algebroid $\sA$ over a space $X$ is a section of the anchor map $s:T_X\to \sA$ respecting Lie brackets.
Then we get the following.
\begin{cor}
 There is a 1 to 1 correspondence 
$$ \left\{   \begin{array}{c}
    \text{transversal unfoldings }  \\
\text{of $\sA_S$ } 
    \end{array} \right\} 
 \longleftrightarrow
\left\{   \begin{array}{c}
    \text{flat connections on the algebroid }  \pi_*\mathfrak{u}(\sA_S)
    \end{array} \right\}. $$
\end{cor}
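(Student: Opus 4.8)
The plan is to obtain the asserted bijection by composing the correspondence of Theorem~\ref{correspondencia2} with the adjunction between the functors $\pi^{-1}$ and $\pi_*$. By Theorem~\ref{correspondencia2}, transversal unfoldings of $\sA_S$ are in one to one correspondence with morphisms of sheaves of $\pi^{-1}\O_S$-Lie algebras $\upsilon\colon\pi^{-1}(T_S)\to\mathfrak{u}(\sA_S)$. Moreover, the morphism $\upsilon_{\sA}$ attached to an unfolding in Proposition~\ref{upsilon} is the inverse of the anchor isomorphism $[a]\colon\sA/\sA_S\xrightarrow{\cong}\pi^*T_S$, so these morphisms are in fact sections of the anchor $a_{\mathfrak{u}}\colon\mathfrak{u}(\sA_S)\to\pi^{-1}T_S$: for $s\in\pi^{-1}T_S$ a lift $\tilde s\in\sA$ of $\upsilon_{\sA}(s)$ represents $\upsilon_{\sA}(s)$ by the operator $\{\tilde s,-\}$, whose symbol $a(\tilde s)$ reduces to $s$ modulo $T_{X|S}$; hence $a_{\mathfrak{u}}\circ\upsilon_{\sA}=\id$. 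It therefore suffices to match bracket-respecting sections of $a_{\mathfrak{u}}$ with flat connections on $\pi_*\mathfrak{u}(\sA_S)$.

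First I would apply the adjunction $\Hom_{\pi^{-1}\O_S}(\pi^{-1}(T_S),\mathfrak{u}(\sA_S))\cong\Hom_{\O_S}(T_S,\pi_*\mathfrak{u}(\sA_S))$, sending $\upsilon$ to its adjoint $\hat\upsilon\colon T_S\to\pi_*\mathfrak{u}(\sA_S)$. Since by the preceding Proposition the bracket on $\pi_*\mathfrak{u}(\sA_S)$ is the push-forward of the bracket on $\mathfrak{u}(\sA_S)$, this adjunction carries $\pi^{-1}\O_S$-Lie algebra morphisms to $\O_S$-Lie algebra morphisms; hence $\upsilon$ respects brackets if and only if $\hat\upsilon$ does. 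Second, I would verify that the adjunction intertwines the anchors: applying $\pi_*$ to $a_{\mathfrak{u}}$ and using the canonical isomorphism $\pi_*\pi^{-1}T_S\cong T_S$ (where the properness of $\pi$ assumed in the preceding Proposition enters), the identity $a_{\mathfrak{u}}\circ\upsilon=\id$ becomes $\pi_*a_{\mathfrak{u}}\circ\hat\upsilon=\id_{T_S}$. Thus bracket-respecting sections $\upsilon$ of $a_{\mathfrak{u}}$ correspond bijectively to bracket-respecting sections $\hat\upsilon$ of $\pi_*a_{\mathfrak{u}}$, and the latter are by definition precisely the flat connections on the Lie algebroid $\pi_*\mathfrak{u}(\sA_S)$.

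The hard part will be the bookkeeping of the second step: checking that the adjunction isomorphism intertwines the two anchors through the identification $\pi_*\pi^{-1}T_S\cong T_S$, and confirming that the bracket-respecting morphisms of Theorem~\ref{correspondencia2} are precisely the sections of $a_{\mathfrak{u}}$, so that no flat connection is lost and none is spurious. For the latter, a direct computation shows that the unfolding built from $\upsilon$ has $[a]=(a_{\mathfrak{u}}\circ\upsilon)\otimes\O_X$ on $\sA/\sA_S$, so that requiring the round trip through the two constructions of Theorem~\ref{correspondencia2} to be the identity pins down $a_{\mathfrak{u}}\circ\upsilon=\id$. Granting these compatibilities, the corollary follows formally by composing the bijection of Theorem~\ref{correspondencia2} with the adjunction.
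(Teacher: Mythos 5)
Your proposal is correct and takes essentially the same approach as the paper, which presents this corollary as an immediate consequence of Theorem~\ref{correspondencia2} together with the Lie algebroid structure on $\pi_*\mathfrak{u}(\sA_S)$ — exactly your composition of that theorem with the $\pi^{-1}\dashv\pi_*$ adjunction and the definition of a flat connection as a bracket-respecting section of the anchor. Your explicit check that the morphisms arising from transversal unfoldings are genuine sections of $a_{\mathfrak{u}}$ (so that the correspondence lands exactly on flat connections, not on arbitrary bracket-respecting morphisms) makes precise a point the paper leaves implicit.
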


In particular to have an unfolding of a family $\sA_S$ of algebroids we must have an epimorphic anchor map on the algebroid $\pi_*\mathfrak{u}(\sA_S)$.
So for any family $\sA_S$ we have a foliation in the base space $S$ induced by the algebroid $\pi_*\mathfrak{u}(\sA_S)$. Any unfolding of a restriction of the family $\sA_S$ must be over a leaf of the considered foliation (compare with \cite{Genzmer}).

\begin{prop}
 Given a pull-back diagram of holomorphic spaces with smooth vertical arrows
 \[
  \xymatrix{
    Y \ar[r]^\phi \ar[d]_{\pi_R} & X \ar[d]^\pi \\
    R \ar[r]^f & S.
  }
 \]
And a family $\sA_S$ of algebroids over $\pi:X\to S$ such that $\mathrm{sing}(\sA)$ has relative codimension greater than $2$ as a subscheme of $X/S$. We have a canonical morphism
\[
 \pi_{R*}\mathfrak{u}(\phi^\bullet \sA)\to f^\bullet \pi_{*}\mathfrak{u}(\sA),
\]
as algebroids on $R$. 
\end{prop}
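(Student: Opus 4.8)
The plan is to reduce the statement to the base-change isomorphism of Theorem \ref{propPullBackD} and then to the universal property of the algebroid pull-back $f^\bullet$. First I would fix the geometry of the Cartesian square. Since $\pi$ and $\pi_R$ are smooth and the square is a pull-back, $Y=X_R$ and Remark \ref{remarkpullback} gives $\phi^* T_{X|S}\cong T_{Y|R}$, so that the underlying sheaf of $\phi^\bullet\sA$ is just $\phi^*\sA$. As $\sA$ is reflexive and flat over $S$ and $\mathrm{sing}(\sA)$ has relative codimension $>2$, the pull-back $\phi^*\sA$ is again reflexive and flat over $R$; hence $(\phi^*\sA, X_R/R)$ meets the hypotheses of Theorem \ref{propPullBackD}, which provides a canonical isomorphism of Lie algebroids $\phi^\bullet D^{\leq 1}_X(\sA)\xrightarrow{\cong} D^{\leq 1}_{Y}(\phi^*\sA)$.

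Next I would transport the construction of $\mathfrak{u}$ across this isomorphism. The bracket of $\phi^\bullet\sA$ is induced from that of $\sA$, so by Lemma \ref{lemafDD} an order $\leq 1$ operator deriving the bracket of $\sA$ pulls back to one deriving the bracket of $\phi^*\sA$, and the isomorphism above lets one recognize all of $\mathrm{Der}_{\mathrm{Lie}}(\phi^*\sA)\cap D^{\leq 1}_{Y}(\phi^*\sA)$ in this way, compatibly with the symbol. Passing to the quotient by $\phi^*\sA$ and cutting out the preimage of the basic fields, the defining condition $\sigma\in\pi^{-1}T_S$ on $X$ is matched, through $Df\colon T_R\to f^*T_S$, with the condition $\sigma\in\pi_R^{-1}T_R$ on $Y$. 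Because $\phi^\bullet D^{\leq 1}_X(\sA)$ is by construction a fiber product of $\phi^*D^{\leq 1}_X(\sA)$ and $T_Y$ over $\phi^*T_X$, every local section of $\mathfrak{u}(\phi^\bullet\sA)$ decomposes into a $\phi^*\mathfrak{u}(\sA)$-component and a $T_Y$-component sharing a common image in $\phi^*T_X$; this structural fact is what I would exploit below.

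Finally I would push forward by $\pi_R$ and invoke the universal property of the fiber product $f^\bullet\pi_*\mathfrak{u}(\sA)=f^*\pi_*\mathfrak{u}(\sA)\oplus_{f^*T_S}T_R$. The $T_Y$-component yields, after $\pi_{R*}$ and the identification $\pi_{R*}\pi_R^{-1}T_R\cong T_R$ (valid since $\pi_R$ is proper, being a base change of the proper map $\pi$), the anchor $\pi_{R*}\mathfrak{u}(\phi^\bullet\sA)\to T_R$. The $\phi^*\mathfrak{u}(\sA)$-component yields, via proper base change $f^{-1}\pi_*\mathfrak{u}(\sA)\cong\pi_{R*}\phi^{-1}\mathfrak{u}(\sA)$ followed by extension of scalars to $\O_R$, a morphism landing in $f^*\pi_*\mathfrak{u}(\sA)$. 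The two are compatible over $f^*T_S$ because both are governed by the symbol, so the universal property of the fiber product produces the desired morphism $\pi_{R*}\mathfrak{u}(\phi^\bullet\sA)\to f^\bullet\pi_*\mathfrak{u}(\sA)$; that it is a morphism of algebroids on $R$ then follows since anchors and brackets are respected ingredient by ingredient.

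The step I expect to be the main obstacle is exactly the production of the $f^*\pi_*\mathfrak{u}(\sA)$-component in the correct direction: the naive base-change transformation for a Cartesian square runs $f^*\pi_*\to\pi_{R*}\phi^*$, opposite to what the statement requires, so the argument must genuinely use Theorem \ref{propPullBackD} to realize $\mathfrak{u}(\phi^\bullet\sA)$ as an honest pull-back. This is precisely where the relative codimension $>2$ hypothesis on $\mathrm{sing}(\sA)$ is spent, together with properness of $\pi$ to invert the topological base-change map. A secondary subtlety, the mismatch between the symbol targets $T_R$ and $f^*T_S$, is absorbed by the fiber-product structure of $f^\bullet$ and demands care exactly at the non-smooth points of $f$.
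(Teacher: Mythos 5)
Your proposal is correct and follows essentially the same route as the paper's proof: Remark \ref{remarkpullback} to identify the sheaf underlying $\phi^\bullet\sA$ with $\phi^*\sA$, Theorem \ref{propPullBackD} (where the relative-codimension hypothesis is spent) to obtain $\phi^\bullet D^{\leq 1}_X(\sA)\simeq D^{\leq 1}_{Y}(\phi^*\sA)$, compatibility of Lie-bracket derivations with the symbol to produce a morphism $\mathfrak{u}(\phi^\bullet\sA)\to \phi^\bullet\mathfrak{u}(\sA)$, and then push-forward by $\pi_R$. The only divergence is at the final step, where the paper asserts $\pi_{R*}\phi^\bullet\mathfrak{u}(\sA)\simeq f^\bullet\pi_*\mathfrak{u}(\sA)$ in one line, whereas you unpack that identification via the universal property of the fiber product and the (inverted, properness-based) base-change map --- an explicit rendering of the same step rather than a different argument.
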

\begin{proof}
 By remark \ref{remarkpullback} we have that the sheaf underlying the algebroid $\phi^\bullet\sA$ is $\phi^*\sA$. 
 
 By the hypothesis on $\mathrm{sing}\sA$ we can apply Proposition \ref{propPullBackD} which says that we have an isomorphism $\phi^\bullet D^{\leq 1}_X(\sA)\simeq D^{\leq 1}_{Y}(\phi^*\sA)$.
 Also, as the Lie algebra structure of $\sA$ is $\O_S$-linear, and $\O_Y= \O_X\otimes_{\O_S}\O_R$, then the Lie algebra structure of $\phi^*\sA$ is $\O_R$-linear.
 If a local section $\varphi\in D^{\leq 1}_Y(\phi^*\sA)(U)$ acts on $\phi^*\sA(U)$ as a derivation of the Lie algebra structure, then it acts as a derivation on sections of $\phi^*\sA$ of the form $s\otimes 1$, so the image of $\varphi$ by the composition 
 $$ D^{\leq 1}_{Y}(\phi^*\sA) \simeq \phi^\bullet D^{\leq 1}_X(\sA) \to \phi^* D^{\leq 1}_X(\sA)$$
 is in the subsheaf $\phi^{*}(D^{\leq 1}_X(\sA)\cap \mathrm{Der}_{\mathrm{Lie}}(\sA))$. The fact that the diagram
 \[
  \xymatrix{
  \phi^\bullet D^{\leq 1}_X(\sA)  \ar[r] \ar[d] & \phi^* D^{\leq 1}_X(\sA) \ar[d]\\
  T_Y \ar[r] & \phi^*T_X
  }
 \]
 commutes implies with the above that we have a morphism $\mathfrak{u}(\phi^\bullet\sA)\to \phi^*\mathfrak{u}(\sA)$ which in turn gives a morphism $\mathfrak{u}(\phi^\bullet\sA)\to \phi^\bullet\mathfrak{u}(\sA)$. The Proposition follows from the fact that $\pi_{R*}\phi^\bullet\mathfrak{u}(\sA)\simeq f^\bullet\pi_*\mathfrak{u}(\sA) $
\end{proof}

\section{Examples}
 \label{sec4}

\subsection{Holomorphic foliations}

Let $a_S\colon \F_S \to\T_{X|S}$
a  family of  singular holomorphic   foliations  over $X$ . 
An unfolding of $\F_S$  is a  foliation  $\F$ on $ X$ with anchor $a:\F \to T_X$ such that 
that
\begin{itemize}
	\item[(a)]    $\F_S= a^{-1}(a_S(\F_S) )$. 
	\item[(b)]  $\dim( \F) =  \dim( \F_S)  + \dim(S).$
\end{itemize}

Now suppose we have $\F=\mathcal{L}$ is a line bundle, in other words we have a foliation by curves. 
Let $v$ be a local generating section of $\mathcal{L}$  and $\psi$ a local $\CC$-linear endomorphism of $\mathcal{L}$, so $\psi(v)=f_\psi \cdot v$ for some local section $f_\psi$ of $\O_X$.
If $\psi$ is a derivation for the Lie algebra structure of $\mathcal{L}$ (which is induced by the inclusion $\mathcal{L}\subseteq T_X$) Then for any local section $g$ of $\O_X$ we get
\begin{align*}
	\psi([v,g\cdot v]) &= [\psi(v), g\cdot v]+[v, \psi(g\cdot v)],\\
	\psi(v(g)\cdot v) &= [f_\psi\cdot v, g\cdot v]+[v,\psi(g\cdot v)]
\end{align*}	
If $\psi$ is also a differential operator we have $\psi(v(g)\cdot v)-v(g)\psi(v)= \sigma(\psi)(v(g))\cdot v$, where $\sigma$ denotes the symbol of $\psi$. Then we have

\begin{align*}
	\psi(v(g)\cdot v) &= [f_\psi\cdot v, g\cdot v] + [v,\psi(g\cdot v)]=\\
v(g)f_\psi\cdot v - \sigma(\psi)(v(g))\cdot v &= f_\psi v(g)\cdot  v + v(g)f_\psi\cdot v - v(\sigma(\psi)(g))\cdot v.
\end{align*}

In other words, $[\sigma(\psi), v](g) = f_\psi v(g)$, as this happens for every local section $g$ of $\O_X$ then 
\begin{equation}\label{equnfol}
	[\sigma(\psi), v] = f_\psi \cdot v = \psi(v).
\end{equation}

Denoting $p:T_X\to \pi^*T_S$ the projection, lets call for an open set $V\subseteq X$
\[
U(\mathcal{L})(V):=\left\{\psi \in \left(\mathrm{Der}_{\mathrm{Lie}}(\mathcal{L})\cap D^{\leq 1}_{X}(\mathcal{L})\right)(V),\ \  \text{s.t: } p\circ \sigma(\psi)\in \pi^{-1}T_S\right\}.
\]
We have then $\mathfrak{u}(\mathcal{L})=U(\mathcal{L})/\mathcal{L}$ and an inclusion of short exact sequences
\begin{equation}\label{diagramunfol}
	\xymatrix{
	0 \ar[r] & K \ar[r] \ar@{^{(}->}[d] & U(\mathcal{L})	\ar[r] \ar@{^{(}->}[d] & a(U(\mathcal{L})) \ar[r]  \ar@{^{(}->}[d]  & 0 \\
	0 \ar[r] & \O_X \ar[r] & D^{\leq 1}_{X}(\mathcal{L}) \ar[r]^\sigma  & T_X \ar[r] & 0.
}
\end{equation}

Notice that equation (\ref{equnfol}) implies that sections of $a(U(\mathcal{L}))$ act on $\mathcal{L}$ as differential operator, so the top short exact sequence in diagram (\ref{diagramunfol}) splits, so every section  $\psi$ of $U(\mathcal{L})$ can be written as $\psi = m_a + Y$ where $m_a$ is multiplication by a local section $a$ of $\O_X$ and $Y$ is a local vector field on $X$. Moreover, equation (\ref{equnfol}) implies that if $m_a + Y$ is a section of $U(\mathcal{L})$ then for a section $X$ of $\mathcal{L}$ we have $[Y,X]=[Y,X]+a\cdot X$, so $a=0$ then the sheaf $K$ of diagram (\ref{diagramunfol}) is null.
In conclusion we can characterize $\mathfrak{u}(\mathcal{L})$ as
\[
	\mathfrak{u}(\mathcal{L})= \left( Y \in T_X : p(Y) \in \pi^{-1}T_S,\ [Y,\mathcal{L}]\subseteq \mathcal{L} \right)/\mathcal{L}.
\] 
In this case the kernel of the algebroid $\pi_*\mathfrak{u}$ is the $\O_S$-linear Lie algebra 
\[
	\mathfrak{g}(\mathcal{L})=\left( Y\in T_{X|S} : [Y, \mathcal{L}]\subseteq \mathcal{L} \right)/\mathcal{L},
\] 
which is the algebra of infinitesimal symmetries of the foliation.

\subsection{Sheaf   of Lie algebra}
Let   $  \sA_S  $ be a family of sheaves of Lie algebras. In this case the anchor map $a_S=0$.
An unfolding of $\sA_S$  is a Lie algebroid $\sA$ on $ X$ with anchor $a:\sA\to T_X$ such 
that
\begin{itemize}
\item[(a)]  The family of Lie algebroids $\sA_S$, is recovered as $\sA_S= a^{-1}(0 )=Ker(a)$. 
\item[(b)]  $\mathrm{rank}(\sA) =  \mathrm{rank}(\sA_S)  + \dim(S).$
\end{itemize}
That is, $  \sA_S  $  is an isotropy sub-Lie algebroid of a  Lie algebroid  $a:\sA\to T_X$ and the dimension of the foliation associated this Lie algebroid
has dimension equal to  $\dim(S)$ by the condition $b)$.
We have 
$$
0\to   \sA_S \to \sA\to Im(a) \to 0.
$$
Therefore, if  the unfolding is transversal, we have  the  isomorphism $[a]: \sA/\sA_S\xrightarrow{\cong}\pi^* T_S$. That is $ Im(a) \cong \pi^* T_S$, this in turn define a splitting of the short exact sequence $0\to T_{X|S}\to T_X\to \pi^*T_S\to 0$. 

In particular we can take any sheaf $\F$ flat over $S$ and take the algebroid $\sA_S$ to be $\F$ with the structure of an abelian Lie algebra and the zero anchor map. 
In this case we get the extension of Lie algebras 
\[
 0\to \F\to \sA \to \pi^*T_S\to 0.
\]
This extension defines an action of $\pi^*T_S$ on $\F$, in particular we have a flat connection on $\pi_*\F$. The extension also defines an homology class $c$ on the Chevalley-Eilenberg cohomology $c\in H^2(\pi^*T_S,\F)$.
Reciprocally, given a splitting of $0\to T_{X|S}\to T_X\to \pi^*T_S\to 0$, and a flat connection $\nabla$ on the quasi-coherent sheaf $\pi_*\F$, we get a Lie algebra action of $\pi^*T_S$ on $\F$.
Indeed, let $p\in X$, $v$ be a local section of ${\pi^*T_S}_p$ and $x\in \F_p$. As $\F_p$ is a localization of $\F_{\pi^{-1}(\pi(p))}$ we can write $x$ as $\sum_i f_i y_i$ with $y_i\in \F_{\pi^{-1}(\pi(p))}$ and $f_i\in \O_{X,p}$, we can also assume $v=g\cdot w$ with $w\in T_{S,\pi(p)}$ and $g\in \O_{X,p}$. Now, denoting by $\iota:\pi^*T_S\to T_X$ the splitting, we can define the action of $\pi^*T_S$ in $\F$ as
\[
 \nabla_{v}(x)= \sum_i g\cdot\iota(w)(f_i) y_i + f_i \nabla_w(y_i). 
\]

Now,  given an element of the Chevalley-Eilenberg cohomology $c\in H^2(\pi^*T_S,\F)$, where $\F$ is taken as a $\pi^*T_S$-module with the action just defined, we get an abelian extension of Lie algebras
\[
 0\to \F \to \sA \to \pi^*T_S\to 0.
\]

Which is an unfolding of $\F$ as abelian Lie algebroid with trivial anchor.

\subsection{Poisson structures}
Let  $a_S:(\Omega_{X|S}^1,\{\cdot\,,\cdot\}_S)\to T_{X|S}$ be a family of holomorphic Poisson structures over a smooth morphism $\pi:X\to S$.  
A Poisson structure $a:(\Omega_{X}^1,\{\cdot\,,\cdot\})\to T_{X}$ on $ X$   is   an unfolding of 
$a_S:(\Omega_{X|S}^1,\{\cdot\,,\cdot\}_S)\to T_{X|S}$ if $\Omega_{X|S}^1$ is the  pre-image by $a$ of the  associated  symplectic foliation of $(\Omega_{X|S}^1,\{\cdot\,,\cdot\}_S)$, since $\mathrm{rank}(\Omega_{X}^1) =  \mathrm{rank}(\Omega_{X|S}^1)  + \dim(S).$


We have a diagram

\begin{equation} 
	\xymatrix{
	0 \ar[r] &  \pi^*\Omega_S^1 \ar[r]^{i} \ar[d]^{\rho} &   \Omega_{X}^1	\ar[r] \ar[d]^{a} &  \Omega_{X|S}^1 \ar[r]  \ar[d]^{a_S}  & 0 \\
   &  \pi^*T_S & T_X \ar[l]^{i^*}  & \ar[l]  T_{X|S}  & \ar[l] 0 ,
}
\end{equation}
where $\rho:=i^* \circ a\circ i$.
This implies that the map $\rho: \pi^*\Omega_S^1 \to  \pi^*T_S$   induces a Poisson structure on $S$ by  $ \pi_*\rho: \Omega_S^1 \to  T_S$.

If  the unfolding is transversal, we have that the  isomorphism $[a]: \Omega_{X}^1/\Omega_{X|S}^1 \to  \pi^*T_S$ provides   a splitting for the sequence   
\begin{equation} 
	\xymatrix{
	0 \ar[r] &  \pi^*\Omega_S^1 \ar[r]    &   \Omega_{X}^1	\ar[r]  &  \Omega_{X|S}^1  \ar[r] & 0
}
\end{equation}
 which implies that $\rho: \pi^*\Omega_S^1 \to  \pi^*T_S$ is an isomorphism, i.e, $ \pi_*\rho: \Omega_S^1 \to  T_S$ is a symplectic structure on $S$.  


It would be interesting to study how  unfoldings of holomorphic Poisson structures  behave under Morita equivalence \cite{Correa}.

\subsection{Sheaf of logarithmic forms}

Let $X$ be a smooth projective variety and $D$ an effective normal
crossing divisor on $X$.  Denote $\imath: T_{X}(-\log D)\to T_{X}$ the inclusion anchor map. 
A deformation of the pair $(X,D)\to S$ can   be interpreted as  a family of Lie algebroids $\imath_S: T_{X|S}(-\log D) \to   T_{X|S} $, where $T_{X|S}(-\log D) = \imath(T_{X}(-\log D))\cap  T_{X|S}$ and $\imath_S:=\imath|_S$.   Since    the rank of $T_{X|S}(-\log D) $ is  $\dim(X) - \dim(S)$ and $\imath^{-1}(T_{X|S}(-\log D))=T_{X}(-\log D)$,  then  $\imath: T_{X}(-\log D)\to T_{X}$ is an unfolding of $T_{X|S}(-\log D)$.

If  the unfolding is transversal, we have  the  isomorphism $$[\imath]:  T_{X}(-\log D)/ T_{X|S}(-\log D) \xrightarrow{\cong}\pi^* T_S.$$
We have the  holomorphic Bott's partial connection on $T_{X}(-\log D)/ T_{X|S}(-\log D)$
\[
\nabla: T_{X}(-\log D)/ T_{X|S}(-\log D) \to \Omega^1_{X|S}(\log D) \otimes [T_{X}(-\log D)/ T_{X|S}(-\log D)]
\]
by setting
\[
\nabla_u(q)=\phi([i_S(u),\tilde q]),
\]
where $\phi: T_{X}(-\log D)\to T_{X}(-\log D)/ T_{X|S}(-\log D)$ denotes the projection,  $\tilde q \in  T_{X}(-\log D)$ such that
$\phi(\tilde q)=q$ and $u \in  T_{X|S}(-\log D)$.  Since $\nabla$ is flat along $ T_{X|S}(-\log D)$ and  the  unfolding is transversal we conclude that it induces a holomorphic connection on $T_S$ given by $\tilde{\nabla}:= \pi_*(\nabla \circ  [\imath])$.

\subsection*{Acknowledgments}
The authors  thank the anonymous referees for giving many suggestions that helped improving the
presentation of the paper.

{\tiny

{

}
\

\

\

{\small
\noindent
\begin{tabular}{l l}
Mauricio Corr\^ea$^*$ \hspace{2cm}\null&\textsf{mauriciojr@ufmg.br,mauricio.barros@uniba.it}\\
Ariel Molinuevo$^\dag$  &\textsf{amoli@im.ufrj.br}\\
Federico Quallbrunn$^\ddag$  &\textsf{fquallb@dm.uba.ar}\\
\end{tabular}}

\

\

\

{\tiny
\noindent
\begin{tabular}{l l l}
$^*${ICEx - UFMG} & $^\dag$Instituto de Matem\'atica& $^\ddag$ {Departamento de Matem\'atica}  \\
{Departamento de Matem\'atica} &Av. Athos da Silveira Ramos 149 & {Universidad CAECE} \\
{Av. Antonio Carlos 6627} &Bloco C, Centro de Tecnologia, UFRJ  & {Av. de Mayo 866}\\
{CEP 30123-970} &Cidade Universit\'aria, Ilha do Fund\~ao & {CP C1084AAQ} \\ 
{Belo Horizonte, MG}&CEP 21941-909  &  {Ciudad de Buenos Aires}  \\
{Brasil}  &Rio de Janeiro, RJ &{Argentina}   \\
&Brasil &
\\
$^*$Universit\`a degli Studi di Bari Aldo Moro & &
\\
Dipartimento di Matematica&&
\\
Via E.Orabona 4, 70125 Bari& &
\\
Italy& &
\end{tabular}}

\end{document}